\newtheorem{thm}{Theorem}[section]
\newtheorem{lem}[thm]{Lemma}
\newtheorem{prop}[thm]{Proposition}
\theoremstyle{remark}
\newtheorem{rem}{Remark}[section]
\def\CH{{\mathcal H}}
\def\CL{{\mathcal L}}
\def\n{{\mathfrak n}}
\def\v{{\mathfrak v}}
\def\z{{\mathfrak z}}
\def\h{{\mathfrak h}}
\def\A{{\mathbb A}}
\def\C{{\mathbb C}}
\def\H{{\mathbb H}}
\def\R{{\mathbb R}}
\def\S{{\mathbb S}}
\def\Z{{\mathbb Z}}
\def\1{\text{\bf {1}}}
\def\h{{\mathfrak h}}
\begin{document}

\title[Heat kernel transform on nilmanifolds]
{ Heat kernel transform on nilmanifolds associated to H-type groups}
\author{ A. Dasgupta and S. Thangavelu}

\address{Department of Mathematics\\ Indian Institute
of Science\\Bangalore-560 012}
\email{adgupta@math.iisc.ernet.in, veluma@math.iisc.ernet.in}

\date{\today}
\keywords{ H-type groups, representations, Laplacians, heat kernels,
Weil-Brezin transforms, Bergman spaces}
\subjclass{ Primary: 22E30. Secondary: 35H20, 35K05, 58J35}
\thanks{ }

\begin{abstract}
We study the heat kernel transform on a nilmanifold $ \Gamma \backslash N $
associated to a H-type group. Using a reduction technique we reduce the
problem to the case of Heisenberg groups. The image of
$ L^2(\Gamma \backslash N) $ under the heat kernel transform is shown to be a
direct sum of weighted Bergman spaces.

\end{abstract}

\maketitle

\section{Introduction}
\setcounter{equation}{0}

The aim of this article to study the heat kernel transform on  nilanifolds
associated to nilpotent groups of Heisenberg type (H-type in short). Heat
kernel transform on the Heisenberg group $ \H^n $  and the nilmanifold
associated to the standard lattice
$ \Gamma = \Z^n \times \Z^n \times \frac{1}{2}\Z $ has been studied by
Kr\"otz-Thangavelu-Xu in \cite{KTX1} and \cite{KTX2} respectively. Here we
extend their results to the general nilmanifold associated to a H-type
group.

The problem on H-type groups can be easily reduced to the case of $ \H^n $
via a partial Radon transform. This technique employed by Ricci \cite{R},
Mueller \cite{M} and others have turned out to be very useful. Thus given a
nilmanifold on a H-type group the problem can be reduced to the study of a
family of Heisenberg nilmanifolds. Lattice subgroups of the Heisenberg group
leading to nilmanifolds have been completely characterised, thanks to the
work of Tolimieri \cite{T}. Upto an automorphism, coming from the sympectic
group, they are given by lattices of the form $ \Gamma(\mathbf{l}) $ where
$\mathbf{l} = (l_1,...,l_n) $ where $ l_j $ are positive numbers such that
$ l_j $ divides $ l_{j+1} $ (see next section).

Thus we reduce the problem to the study of the heat kernel
transform on $ L^2(\Gamma(\mathbf{l})\backslash \H^n) .$ Using the
same techniques employed in \cite{KTX2} we can decompose $
L^2(\Gamma(\mathbf{l})\backslash \H^n) $ into an orthogonal direct
sum of subspaces. This is achieved by means of variants of
Weil-Brezin transforms. The image of each of the subspaces under
the heat kernel transform is then characterised. As in the case
dealt with in \cite{KTX2} we see that twisted Bergman spaces and
Hermite-Bergman spaces occur in the characterisation.

This work is closely related to \cite{KTX2} and the proofs are
modelled after the  corresponding proofs there. Hence more often
we omit proofs and supply only those which require a different
approach. In the next section we deal with heat kernel transform
on Heisenberg nilmanifolds. The case of H-type nilmanifolds is
treated in section 3.

\section{Heat kernel transform on Heisenberg nilmanifolds}
\setcounter{equation}{0}

In this section we consider the heat kernel transform for the Laplacian on a
general Heisenberg nilmanifold. Using the structure of such manifolds we
reduce the problem to the case of certain special nilmanifolds. We then use
variants of Weil-Brezin transform to study the decomposition of
the underlying $ L^2 $ space and obtain its image under the heat kernel
transform.

\subsection{Heisenberg nilmanifolds}

In this section we consider Heisenberg  nilmanifolds which are defined as
 quotients of
${\mathbb{H}}^{n}$ by certain lattice subgroups. All such lattices
have been characterised and here we recall some results without
proof  from \cite{T}. Our aim is to look at the
right regular representation of ${\mathbb{H}}^n$ on
$L^{2}(\Gamma\backslash{\mathbb{H}}^{n})$ for a general lattice and to
obtain a
decomposition of $L^{2}(\Gamma\backslash{\mathbb{H}}^{n})$ into
irreducible subspaces. The main references for this subsection are:
~\cite{AB,B,KTX2,T}.

A discrete subgroup $\Gamma$ of $\H^n$ is said to be lattice
subgroup if the quotient $\Gamma\backslash {\H^n}$ is compact. The
standard example that has been studied in details in ~\cite{KTX2}
is $\Gamma=\Z^n\times\Z^n\times{\frac{1}{2}}\Z.$ The quotient is
$\Gamma\backslash{\H^n}\simeq{{\mathbb{T}}^{2n}}\times{{\mathbb{S}}^1}$
is clearly compact. It is a circle bundle over the torus
${\mathbb{T}}^{2n}.$

Let ${\mathbb{A}}_{n}$ stand for the group of automorphism of
$\H^n$ and let ${\mathbb{A}}^{0}_{n}$ be its identity component.
This group ${\mathbb{A}}^{0}_{n}$ plays an important role in
classifying all the lattices in $\H^n$. Let $Sp(2n,\R)$ stand for
the symplectic group consisting of all $2n\times 2n$ matrices
preserving the symplectic form $\omega(\xi,\eta).$ That is,
$A\in Sp(2n,\R)$ if and only if $\omega(A\xi,A\eta)=\omega(\xi,\eta).$
Every element $A\in Sp(2n,\R)$ defines an automorphism in
${\mathbb{A}}^{0}_{n}$ denoted by the symbol $A(\xi,t)=(A\xi,t).$
Note that if $\Gamma$ is a lattice then $A(\Gamma)$ is
also another lattice. Thus we have an action ${\mathbb{A}}^{0}_{n}$
on the set $L(\H^n)$ of all lattice $\H^n.$

Observe that $\Gamma\cap Z$ is a nontrivial discrete subgroup of $Z$ and
hence there is a unique positive real number $\beta(\Gamma)$ such that
$$\Gamma\cap Z=\left\{\left(0,\beta(\Gamma): m\in
\Z\right)\right\}.$$
The lattices $\pi(\Gamma)$ where
$\pi:\H^n \rightarrow \R^{2n}$ be the projection $\pi(\xi,t)=\xi,$
satisfies the condition
$$\left[\Gamma,\Gamma\right]\subset
\Gamma\cap Z=\left\{\left(0,\beta(\Gamma): m\in
\Z\right)\right\}.$$
Indeed, when $g=(\xi,t),~ h=(\eta,s)$ then
$\left[g,h\right]=\left(0,\omega(\xi,\eta)\right).$ Actually we
have $\omega\left(\pi(\Gamma),\pi(\Gamma)\right)=\beta(\Gamma)Z$
whenever $\Gamma$ is a lattice in $\H^n$. This motivates us to make
the following definition.

A lattice $D$ in $\R^n$ is said to be a Heisenberg
lattice if $\omega(D,D)=lZ$ for some $l>0.$ The collection of such
lattices will be denoted by $HL(\R^{2n}).$ Thus $\pi(\Gamma)\in HL(\R^{2n})$
for any lattice $\Gamma$ in $\H^n$ and vice versa. We
recall the following theorem without proof from
Tolimieri~\cite{T}.

\begin{thm} There is one to one correspondence between lattices in
$\H^n$ and Heisenberg lattices in $\R^{2n}.$
\end{thm}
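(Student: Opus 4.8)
The plan is to exhibit the correspondence explicitly as $\Gamma\mapsto\pi(\Gamma)$, with $\pi(\xi,t)=\xi$, and then to construct an inverse. One direction is essentially already in hand: if $\Gamma$ is a lattice then $\Gamma\cap Z$ is a cocompact discrete subgroup of the centre $Z\cong\R$, so $\pi(\Gamma)\cong\Gamma/(\Gamma\cap Z)$ is a full lattice in $\R^{2n}$, and the identity $[g,h]=(0,\omega(\pi g,\pi h))$ gives $\omega(\pi(\Gamma),\pi(\Gamma))=\beta(\Gamma)\Z$; hence $\pi(\Gamma)\in HL(\R^{2n})$. So the work splits into showing that $\pi$ is surjective and pinning down the precise sense in which it is injective.

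For surjectivity, fix $D\in HL(\R^{2n})$ with $\omega(D,D)=l\Z$. The form $\tfrac1l\omega$ is an integral, nondegenerate, alternating form on $D\cong\Z^{2n}$ (nondegeneracy because $D$ spans $\R^{2n}$), so Frobenius' normal form for skew-symmetric integer matrices gives a $\Z$-basis $a_1,\dots,a_n,b_1,\dots,b_n$ of $D$ with $\omega(a_i,a_j)=\omega(b_i,b_j)=0$ and $\omega(a_i,b_j)=\delta_{ij}d_i$, where $d_1\mid\cdots\mid d_n$ are positive and, since they generate $\omega(D,D)=l\Z$, satisfy $d_1=l$. Let $B$ be the bilinear form on $\R^{2n}$ with $B(a_i,b_i)=d_i$ and $B=0$ on all other ordered pairs of basis vectors; then $B-B^{\top}=\omega$ and $B(D,D)\subset l\Z$. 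Set
\[
\Gamma_D=\{(\xi,t):\ \xi\in D,\ t\in\tfrac12 B(\xi,\xi)+l\Z\}.
\]
A routine check with the group law shows $\Gamma_D$ is closed under products and inverses — the defect between $\tfrac12 B(\xi+\eta,\xi+\eta)$ and $\tfrac12 B(\xi,\xi)+\tfrac12 B(\eta,\eta)+\tfrac12\omega(\xi,\eta)$ equals $B(\eta,\xi)\in l\Z$ — and in fact $\Gamma_D$ is the subgroup of $\H^n$ generated by $(a_1,0),\dots,(b_n,0)$. It is discrete (near the identity $\xi\in D$ forces $\xi=0$, whence $t\in l\Z$), and $\Gamma_D\backslash\H^n$ maps onto the compact torus $D\backslash\R^{2n}$ with fibre $(\Gamma_D\cap Z)\backslash Z=\R/l\Z$, so it is compact; and $\pi(\Gamma_D)=D$ by construction. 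Hence $\pi$ is onto $HL(\R^{2n})$.

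For injectivity, let $\Gamma,\Gamma'$ be lattices with $\pi(\Gamma)=\pi(\Gamma')=D$. Then $\beta(\Gamma)=\beta(\Gamma')=:\beta$ (each is the generator of $\omega(D,D)$), and for $\xi\in D$ the sets $\{t:(\xi,t)\in\Gamma\}$ and $\{t:(\xi,t)\in\Gamma'\}$ are cosets $\tau(\xi)+\beta\Z$ and $\tau'(\xi)+\beta\Z$; the group law forces on both $\tau$ and $\tau'$ the same identity $\tau(\xi+\eta)\equiv\tau(\xi)+\tau(\eta)+\tfrac12\omega(\xi,\eta)\pmod{\beta\Z}$, so $\sigma:=\tau'-\tau\colon D\to\R/\beta\Z$ is a homomorphism. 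Lifting $\sigma$ to a homomorphism $D\to\R$ (possible since $D$ is free) and using nondegeneracy of $\omega$, one finds $\xi_0\in\R^{2n}$ with $\omega(\xi_0,\xi)\equiv\sigma(\xi)\pmod{\beta\Z}$ for all $\xi\in D$; since conjugation by $(\xi_0,0)$ sends $(\xi,t)$ to $(\xi,t+\omega(\xi_0,\xi))$, it carries $\Gamma$ onto $\Gamma'$. Thus every lattice with projection $D$ is conjugate to the normal form $\Gamma_D$ built above, and this is the precise content of the "one to one correspondence" (after conjugate — equivalently, as in the introduction, automorphic — lattices are identified). The main obstacle is exactly this point: $\pi$ is literally surjective but not literally injective, so one must either pass to conjugacy classes or single out the $\Gamma_D$, and the real work is the integral symplectic normal form for $D$ together with the cocycle/nondegeneracy argument showing that $\Gamma_D$ is a genuine cocompact lattice that absorbs all of the conjugation freedom.
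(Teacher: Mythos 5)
First, a point of reference: the paper does not prove this theorem at all --- it explicitly recalls it ``without proof from Tolimieri'', so there is no argument of the authors to compare yours with; I can only judge your attempt on its own terms. The forward direction and your surjectivity construction are sound: producing $\Gamma_D$ from a bilinear form $B$ with $B-B^{\top}=\omega$ and $B(D,D)\subset l\Z$ is a correct and standard way to exhibit a cocompact discrete subgroup lying over a prescribed Heisenberg lattice, and your closure computation is right.

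The injectivity half, however, has a genuine gap, and it sits exactly at the parenthetical ``each is the generator of $\omega(D,D)$''. The inclusion $[\Gamma,\Gamma]\subset\Gamma\cap Z$ gives only $\omega(D,D)\subset\beta(\Gamma)\Z$, not equality, and $\beta(\Gamma)$ is \emph{not} determined by $D=\pi(\Gamma)$. (The paper does assert $\omega(\pi(\Gamma),\pi(\Gamma))=\beta(\Gamma)\Z$, but its own standard example already contradicts this: for $\Gamma=\Z^n\times\Z^n\times\frac12\Z$ one has $\omega(D,D)=\Z$ while $\beta(\Gamma)=\frac12$.) Concretely, take $D=\Z^{2n}$: the subgroup $\Gamma_1$ generated by $(e_1,0),\dots,(e_{2n},0)$ has $\Gamma_1\cap Z=\Z$ (for $n=1$ it is $\{(m,n,\tfrac{mn}{2}+k):k\in\Z\}$), while $\Gamma_2=\Z^n\times\Z^n\times\frac12\Z$ has $\Gamma_2\cap Z=\frac12\Z$; both are lattices projecting onto $D$, and they are not conjugate --- indeed not equivalent under any automorphism, since every automorphism scales $\beta(\Gamma)$ and the generator of $\omega(\pi(\Gamma),\pi(\Gamma))$ by the same factor, and the ratio of these two numbers is $1$ for $\Gamma_1$ and $2$ for $\Gamma_2$. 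More generally your own cocycle analysis, run with $\beta$ as a free parameter, shows that the lattices over a fixed $D$ with $\omega(D,D)=l\Z$ are classified up to conjugacy by the admissible central periods $\beta=l/N$, $N\in\N$, so the fibre of $\pi$ contains infinitely many conjugacy classes. Consequently your conclusion that every lattice over $D$ is conjugate to $\Gamma_D$ is false, and the theorem cannot be proved by showing that $\pi$ is a bijection on conjugacy classes: the correspondence has to be formulated so as to record the central period (or to single out a normalized class of lattices), which is where the real content of Tolimieri's statement lies and which your argument does not supply.
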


To each Heisenberg lattice $D$ one can
associate $n$ positive real numbers $l_1,l_2,...,l_n$ with the
property that $l_j$ divides $l_{j+1}.$ Set
$${\Z^n}^{\ast}=\left\{\mathbf{l}=(l_1,l_2,...,l_n):l_{j+1}l^{-1}_{j}\in
\Z\right\}.$$
Let $e_j,~~1\leq j\leq 2n$ be the standard coordinate vectors in
$\R^{2n}.$ For $\mathbf{l}\in {\Z^n}^{\ast}$ denote
$D(\mathbf{l})=\left[e_1,e_2,...,e_n,l_1 e_{n+1},l_2e_{n+2},...,l_n e_{2n}
\right]$ be the $\Z$ module of $\R^{2n}$ spanned
by the vectors $e_1,e_2,...,e_n,l_1e_{n+1},...,l_{n}e_{2n}.$ Then
it is clear that $D(\mathbf{l})\in HL(\R^{2n})$ and
$\omega\left(D(\mathbf{l}),D(\mathbf{l})\right)=\Z.$

\begin{thm} For each $D\in HL(\R^{2n})$ there exists a unique
$\mathbf{l}\in {\Z^n}^{\ast},$ a unique $d>0$ and an $A\in Sp(2n,\R)$
such that $D=A\left(d.D(\mathbf{l})\right).$
\end{thm}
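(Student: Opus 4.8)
The plan is to read this as a symplectic normal-form statement and invoke the elementary divisor theorem for alternating $\Z$-bilinear forms; concretely, one splits off the scale $d$ first, applies the normal form to the lattice, and then repackages the normalising basis as an element of $Sp(2n,\R)$.

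First I would extract $d$. Since $D$ is a Heisenberg lattice, $\omega(D,D)=l\Z$ for a unique $l>0$; put $d:=\sqrt{l}$ and $\tilde D:=d^{-1}D$, so $\tilde D$ is again a lattice in $\R^{2n}$ and, by bilinearity of $\omega$, $\omega(\tilde D,\tilde D)=d^{-2}\omega(D,D)=\Z$. This already pins down $d$: if $D=A(d'D(\mathbf{l}))$ with $A\in Sp(2n,\R)$, then since $A$ preserves $\omega$ one gets $\omega(D,D)=(d')^2\omega(D(\mathbf{l}),D(\mathbf{l}))=(d')^2\Z$, forcing $d'=\sqrt{l}=d$. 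So it remains to produce $\mathbf{l}$ and $A$ with $\tilde D=A(D(\mathbf{l}))$, and to establish uniqueness of $\mathbf{l}$. Now view $\tilde D$ as a free $\Z$-module of rank $2n$ carrying the alternating $\Z$-valued form $\omega$. By the structure theorem for alternating forms over $\Z$ there is a $\Z$-basis $f_1,\dots,f_n,g_1,\dots,g_n$ of $\tilde D$ and positive integers $a_1\mid a_2\mid\cdots\mid a_n$ such that
$$\omega(f_i,g_j)=a_j\delta_{ij},\qquad \omega(f_i,f_j)=\omega(g_i,g_j)=0\qquad(1\le i,j\le n).$$
Because $\omega(\tilde D,\tilde D)=\Z$ and $a_1$ divides every $a_j$, we have $a_1=1$; set $\mathbf{l}:=(a_1,\dots,a_n)\in{\Z^n}^{\ast}$.

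Next I would build $A$. Recall $D(\mathbf{l})$ is the $\Z$-span of $e_1,\dots,e_n,l_1e_{n+1},\dots,l_ne_{2n}$, and that $\omega(e_i,l_je_{n+j})=l_j\delta_{ij}=a_j\delta_{ij}$ while $\omega(e_i,e_j)=\omega(e_{n+i},e_{n+j})=0$. Define $A\in GL(2n,\R)$ on the standard basis by $Ae_i:=f_i$ and $Ae_{n+j}:=a_j^{-1}g_j$ for $1\le i,j\le n$; then $A(l_je_{n+j})=g_j$. A check on basis pairs --- for instance $\omega(Ae_i,Ae_{n+j})=a_j^{-1}\omega(f_i,g_j)=\delta_{ij}=\omega(e_i,e_{n+j})$, with the $f$--$f$ and $g$--$g$ pairings vanishing on both sides --- shows $A$ preserves $\omega$, so $A\in Sp(2n,\R)$. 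Since $A$ carries the generating set $\{e_1,\dots,e_n,l_1e_{n+1},\dots,l_ne_{2n}\}$ of $D(\mathbf{l})$ onto the $\Z$-basis $\{f_1,\dots,f_n,g_1,\dots,g_n\}$ of $\tilde D$, we conclude $A(D(\mathbf{l}))=\tilde D$ and hence $D=d\,\tilde D=A(d\,D(\mathbf{l}))$.

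For uniqueness of $\mathbf{l}$, the tuple $(a_1,\dots,a_n)$ is the list of elementary divisors of the pair $(\tilde D,\omega)$, hence an invariant of it: for example $(a_1\cdots a_k)^2$ equals the $\gcd$ of the $2k\times 2k$ minors of the Gram matrix of $\omega$ computed in any $\Z$-basis of $\tilde D$, and these determinantal divisors are unchanged under $GL(2n,\Z)$; since $\tilde D=d^{-1}D$ depends only on $D$, so does $\mathbf{l}$. (The matrix $A$ itself is not unique --- it may be post-composed with any symplectic automorphism fixing $D(\mathbf{l})$ --- which is why the statement claims only its existence.) The one genuinely non-formal ingredient is the alternating normal form used above; I would either cite it as classical (it is precisely what underlies the Tolimieri result quoted in the text) or reprove it by the standard descent: let $a_1$ be the positive generator of the subgroup of $\Z$ generated by all values $\omega(x,y)$ with $x,y\in\tilde D$, realise $a_1=\omega(f_1,g_1)$ for suitable $f_1,g_1$, observe that $a_1$ then divides $\omega(f_1,z)$ and $\omega(g_1,z)$ for every $z\in\tilde D$ (otherwise correcting $z$ produces a nonzero value of smaller modulus), so $\Z f_1\oplus\Z g_1$ splits off as an orthogonal direct summand, and induct on the rank of the complement. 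The bookkeeping that the divisibility chain $a_1\mid a_2\mid\cdots$ propagates correctly through this induction is the main technical point, but it is entirely routine.
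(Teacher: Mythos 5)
Your argument is correct, but note that the paper does not prove this statement at all: it is recalled verbatim from Tolimieri's classification of Heisenberg lattices, with an explicit ``without proof'' disclaimer, so there is no in-paper argument to compare against. What you supply is the standard proof underlying Tolimieri's result: peel off the scale $d=\sqrt{l}$ using $\omega(D,D)=l\Z$ and the fact that symplectic maps preserve the value group; apply the Frobenius/elementary-divisor normal form for a nondegenerate $\Z$-valued alternating form on the rank-$2n$ free module $\tilde D=d^{-1}D$ to get a basis with $\omega(f_i,g_j)=a_j\delta_{ij}$ and $a_1\mid\cdots\mid a_n$; build $A$ by sending the generators of $D(\mathbf{l})$ to that basis and check symplecticity on basis pairs; and get uniqueness of $\mathbf{l}$ from the $GL(2n,\Z)$-invariance of the determinantal divisors. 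All of these steps are sound, and your descent sketch for the normal form is the correct classical one. One point worth making explicit: the uniqueness of the pair $(d,\mathbf{l})$ tacitly requires the normalization $l_1=1$ (equivalently, the paper's assertion that $\omega(D(\mathbf{l}),D(\mathbf{l}))=\Z$, which fails for general $\mathbf{l}\in{\Z^n}^{\ast}$ as that set is defined); without it one could trade a rescaling of $\mathbf{l}$ against a change of $d$. Your observation that $a_1=1$ because $\omega(\tilde D,\tilde D)=\Z$ is exactly what delivers this normalization, so your proof in fact repairs a small imprecision in the statement as recalled in the paper.
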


Now combining the above the theorems we can obtain the following
result which gives the structure of all lattices in $\H^n.$ Given
$\mathbf{l}\in {\Z^n}^{\ast},$ let $\Gamma(\mathbf{l})$ be the
subgroup  of $\H^n$ generated by
$$\widetilde{e_1},\widetilde{e_2},...,\widetilde{e_{2n}}$$ where
$\widetilde{e_i}=(e_i,0)$ for $i=1,2,3,...n$ and
$\widetilde{e_j}=(l_{j}e_{j+1},0)$ for $j=1,2,...,n.$ Then we have
the following result from Tolimieri~\cite{T}. We denote the
collection of all lattices on the Heisenberg group by $L(\H^n).$

\begin{thm} For each $\Gamma\in {L(\H^n)}$ there exists a unique
$\mathbf{l}\in {\Z^n}^{\ast},$ a unique $d>0$ and an
$A\in {\mathbb{A}}_n$ such that
$\Gamma=A\left(d.\Gamma(\mathbf{l})\right).$
\end{thm}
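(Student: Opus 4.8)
The plan is to assemble the structural result by combining the two Tolimieri theorems already recorded, using the bijection between lattices in $\H^n$ and Heisenberg lattices in $\R^{2n}$ as the bridge. First I would fix $\Gamma \in L(\H^n)$ and pass to the Heisenberg lattice $D = \pi(\Gamma) \in HL(\R^{2n})$ guaranteed by the first theorem. Applying the second theorem to $D$ produces a unique $\mathbf{l} \in {\Z^n}^{\ast}$, a unique $d > 0$, and some $A \in Sp(2n,\R)$ with $D = A(d\cdot D(\mathbf{l}))$. The goal is then to lift this decomposition from $\R^{2n}$ back up to $\H^n$: since $A \in Sp(2n,\R)$ acts on $\H^n$ by the automorphism $A(\xi,t) = (A\xi,t) \in {\mathbb{A}}^0_n \subset {\mathbb{A}}_n$, and since dilation by $d$ on $\R^{2n}$ is covered by the corresponding dilation automorphism of $\H^n$ (which multiplies the central variable by $d^2$, hence also lies in ${\mathbb{A}}_n$), one expects $\Gamma = A(d\cdot\Gamma(\mathbf{l}))$ up to these automorphisms.

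The key steps, in order, are: (i) record the passage $\Gamma \mapsto D = \pi(\Gamma)$ and note $\Gamma \cap Z = \{(0,\beta(\Gamma)m) : m \in \Z\}$ with $\omega(\pi(\Gamma),\pi(\Gamma)) = \beta(\Gamma)\Z$, so the ``level'' $\beta(\Gamma)$ of the central part is determined by $D$; (ii) invoke the second theorem to get $\mathbf{l}, d, A$ for $D$; (iii) check that the automorphism $A$ of $\H^n$ together with a central rescaling carries $\Gamma(\mathbf{l})$ (which has $\pi(\Gamma(\mathbf{l})) = D(\mathbf{l})$ and central part $\Z$) onto $\Gamma$, matching both the projected lattice and the central subgroup; (iv) prove uniqueness of $\mathbf{l}$ and $d$ by transporting the uniqueness already established in the second theorem through the bijection of the first theorem, observing that distinct $(\mathbf{l},d)$ give non-${\mathbb{A}}_n$-equivalent lattices because $\mathbf{l}$ is an automorphism invariant of $D(\mathbf{l})$ and $d$ (equivalently $\beta$) is pinned down by the central subgroup, which no automorphism can alter beyond a positive scalar that is absorbed into $d$.

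The main obstacle I anticipate is step (iii): making precise how an automorphism of $\R^{2n}$ (symplectic up to scalar) lifts to an automorphism of $\H^n$, and in particular handling the central coordinate correctly. A general automorphism of $\H^n$ acts on the center by a positive scalar and on $\R^{2n}$ by a map $B$ with $\omega(B\xi,B\eta) = c\,\omega(\xi,\eta)$ for that same scalar $c$; one must verify that $d\cdot D(\mathbf{l})$ with the appropriate scalar is exactly $\pi$ of a lattice whose full preimage in $\H^n$ is $A(d\cdot\Gamma(\mathbf{l}))$, i.e.\ that the first theorem's bijection is equivariant for the ${\mathbb{A}}_n$-action on $L(\H^n)$ and the induced action on $HL(\R^{2n})$. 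Granting that equivariance — which is essentially the content of how Tolimieri sets up the correspondence — the reduction is formal and the result follows. I would therefore state the equivariance explicitly as a lemma (or cite it from \cite{T}) and then present the proof as the three-line chase through the bijection.
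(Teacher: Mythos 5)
Your proposal is correct and follows essentially the same route as the paper, which offers no proof of its own beyond asserting that the result is "obtained by combining the above theorems" and citing Tolimieri. Your elaboration --- passing through $D=\pi(\Gamma)$, invoking the classification of Heisenberg lattices, and lifting via the equivariance of the correspondence under ${\mathbb{A}}_n$ (with the dilation acting as $d^2$ on the center, consistent with the paper's later remark that $\beta(\Lambda)=d^2$ for $\Lambda=A(d.\Gamma(1))$) --- correctly identifies and fills in the one step the paper leaves implicit.
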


In view of the above theorem, in studying the heat kernel transform we can
restrict ourselves to lattices of the form $ \Gamma(\mathbf{l}).$

\subsection{Analysis on the nilmanifold $\Gamma(\mathbf{l})\backslash \H^n$}
\setcounter{equation}{0}

In this subsection we consider the $\Lambda=\Gamma(\mathbf{l}),$
defined in the previous subsection and consider the nilmanifold
$M=\Gamma(\mathbf{l})\backslash \H^n$ associated to it. As we remarked earlier
there is no loss of generality in doing so. The Lebesgue measure on $\H^n$
induces an invariant measure on $M.$ So we get a unitary representation $R$
of $\H^n$ on $L^{2}(M)$ defined by
$$R(g)F\left(\Lambda h\right)=F\left(\Lambda hg\right), ~~~F\in L^{2}(M),
~~g,h \in \H^n.$$
We can identify functions on $M$ with functions on
$\H^n$ that are invariant under left translations by elements of
$\Lambda.$ Now since $\Lambda=$
span$\left\{\widetilde{e_1},\widetilde{e_2},...,\widetilde{e_{2n}}\right\},$
so it is clear that
$$ \Lambda=\Z^n\times{l_1\Z}\times{l_2\Z}\times
...\times{l_n\Z}\times{\frac{l_1}{2}\Z} $$
and any element $h\in\Lambda$ will be
$h=\left(p,q_1l_1,q_2l_2,...,q_nl_n,\frac{l_1}{2}r\right)$ where
$p,q,r\in \Z.$ To make  the notation simple we denote
$\left(q_1l_1,q_2l_2,...,q_nl_n\right)$ by $ql.$ So we can write any
element $h\in \Lambda$ as $h=\left(p,ql,\frac{l_1}{2}r\right).$

Therefore, every $\Lambda$ invariant function is $\frac{l_1}{2}$
periodic in the central variable. Thus by defining
$$H_{k}\left(\Lambda\right)=\left\{F\in L^{2}(M): F(\xi,t)=e^{\frac{4\pi
ik}{l_1}t}F(\xi,0)\right\}$$
we get the orthogonal direct sum decomposition
$$L^{2}(M)=\sum_{k\in\Z}\oplus
H_{k}\left(\Lambda\right).$$
Recall that for each  $\lambda\in \R, ~\lambda\neq 0$
the Schr\"{o}dinger representation $ \pi_\lambda $ of $\H^n$ is given by,
$$\pi_{\lambda}(x,u,\xi)\phi(v)=e^{i\lambda\xi}e^{i\lambda\left(x\cdot
v+\frac{1}{2}x\cdot u\right)}\phi(v+u).$$
It is easy to check that
each $H_{k}(\Lambda)$ is $R$ invariant and hence for every $k\neq 0$
Stone-von Neumann theorem says that the restriction of $R$ to
$H_k(\Lambda)$ decomposes into a direct sum of irreducible
representations each of which is unitarily equivalent to
$\pi_{\frac{4\pi k}{l_1}}.$

Each of the $H_{k}(\Lambda)$ can be further decomposed into
orthogonal subspaces each of which will be the image of $
L^2(\R^n) $ under a unitary operator. For the standard lattice $
\Gamma(1,1,...,1) $ such a decomposition has been obtained in
~\cite{KTX2} (see also \cite{T2}). For the general case we refer
to ~\cite{B, KTX2, T}. As we closely follow \cite{T2} in obtaining
this decomposition we will be only sketchy in our proof.

We first consider Weil-Brezin transform  $V_k$ defined on the Schwartz class
${\mathcal{S}}(\R^n)$ by
$$V_{k}f(x,u,\xi)=e^{i\lambda\xi}e^{i\frac{\lambda}{2}x\cdot u}\sum_{m\in
\Z^n}e^{i\lambda\sum^{n}_{j=1}{l_jm_jx_j}}f(u+ml)$$ where
$ml=(m_1l_1,m_2l_2,...,m_nl_n).$ It is easy to check that $V_{k}f$
is $\Lambda$ invariant. Further it can be shown that the $ L^2(M) $ norm of
$ V_kf  $ is just $ \|f\|_2 $ and hence $ V_k $ can be extended to
the whole of $ L^2(\R^n) $ as an isometry onto $H_k(\Lambda)$. We would like
to decompose $H_k(\Lambda)$ further into orthogonal subspaces.

To effect this decomposition we introduce the finite group $ {\mathbb{A}}_{k}$
which is defined by
$${\mathbb{A}}_{k}={\Z}/{2k\Z}\times {\Z}/{2kp_2\Z}\times ...
\times \Z/2kp_n\Z $$
where $p_i=\frac{l_i}{l_1}$ for $i=1,2,3,...,n.$ For each $j\in
{\mathbb{A}}_{k}$ define
$$V_{k,j}f(x,u,\xi)=e^{2\pi ij\cdot x}V_{k}f(x,u,\xi).$$
Let $ H_{k,j} $ be the image of $ L^2(\R^n) $ under $ V_{k,j}.$ Then we have
the following decomposition.

\begin{prop} For each $ k$,  $H_k$ is the orthogonal
direct sum of the spaces $H_{k,j},$ $j\in{\mathbb{A}}_{k}.$
\end{prop}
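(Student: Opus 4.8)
The plan is to show that the family $\{V_{k,j}\}_{j\in\mathbb{A}_k}$ gives an orthogonal decomposition of $H_k(\Lambda)$ by verifying three things: that each $V_{k,j}$ maps $L^2(\mathbb{R}^n)$ isometrically into $H_k(\Lambda)$, that the images $H_{k,j}$ are mutually orthogonal as $j$ ranges over $\mathbb{A}_k$, and that together they exhaust $H_k(\Lambda)$. Since $V_k$ is already known to be an isometry of $L^2(\mathbb{R}^n)$ onto $H_k(\Lambda)$, the first point reduces to checking that multiplication by $e^{2\pi i j\cdot x}$ carries $\Lambda$-invariant functions to $\Lambda$-invariant functions and preserves the $L^2(M)$ norm. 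Norm preservation is immediate since $|e^{2\pi i j\cdot x}|=1$ and the measure on $M$ is the induced Lebesgue measure; the $\Lambda$-invariance is where the arithmetic of $\mathbb{A}_k$ enters. First I would write out the left action of a generator $\widetilde{e_i}=(e_i,0)$ (for $1\le i\le n$) and $\widetilde{e_{n+j}}=(l_j e_{n+j},0)$ on $V_{k,j}f$, using the group law of $\mathbb{H}^n$, and observe that the extra factor $e^{2\pi i j\cdot x}$ contributes a phase; requiring this phase to be trivial for all generators forces exactly the congruence conditions $j_1\in 2k\mathbb{Z}/\!\!\sim$, $j_i\in 2k p_i\mathbb{Z}/\!\!\sim$, i.e. $j\in\mathbb{A}_k=\mathbb{Z}/2k\mathbb{Z}\times\mathbb{Z}/2kp_2\mathbb{Z}\times\cdots\times\mathbb{Z}/2kp_n\mathbb{Z}$ with $p_i=l_i/l_1$. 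This simultaneously shows $V_{k,j}f\in H_k(\Lambda)$ and explains why $\mathbb{A}_k$ is the correct index set.

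For orthogonality and completeness I would use the Weil–Brezin picture to reduce everything to ordinary Fourier series on a torus. Because $V_k$ is an isometry onto $H_k(\Lambda)$, the inner product $\langle V_{k,j}f, V_{k,j'}g\rangle_{L^2(M)}$ can be computed by unfolding the sum defining $V_k$ over a fundamental domain for $\Lambda$ in $\mathbb{H}^n$. The coordinate $x\in\mathbb{R}^n$ lives on a torus (after quotienting by the lattice generated by the $\widetilde{e_i}$), and on that torus the functions $e^{2\pi i j\cdot x}$ for distinct $j\in\mathbb{A}_k$ are orthogonal characters; integrating out the $x$-variable first kills the cross terms when $j\ne j'$, giving $\langle V_{k,j}f, V_{k,j'}g\rangle = 0$. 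For completeness, I would argue that any $F\in H_k(\Lambda)$, restricted to the torus in the $x$-variable, has a Fourier expansion whose frequencies — because $F$ is $\Lambda$-invariant and already lies in $H_k(\Lambda)$ — are constrained to lie in the coset lattice indexed by $\mathbb{A}_k$; each Fourier component is then of the form $V_{k,j}f$ for a suitable $f\in L^2(\mathbb{R}^n)$. This is the step modelled on the corresponding argument in \cite{KTX2, T2}, so I would invoke those references for the routine parts and present only the modifications forced by the general weights $l_j$.

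The main obstacle is bookkeeping: one must track precisely how the divisibility conditions $l_j\mid l_{j+1}$ and the ratios $p_i=l_i/l_1$ interact with the period $\tfrac{l_1}{2}$ in the central variable and with the lattice translations $f\mapsto f(\cdot + ml)$ appearing inside $V_k$. The subtlety is that the central character $e^{4\pi i k t/l_1}$ and the shift $ml=(m_1l_1,\ldots,m_nl_n)$ combine so that the relevant frequency lattice in the $x$-variable is not $\mathbb{Z}^n$ but the finer/coarser lattice whose quotient by $\mathbb{Z}^n$ is exactly $\mathbb{A}_k$ — getting the indices $2k$, $2kp_2,\ldots,2kp_n$ to come out correctly, rather than off by a factor, is the delicate point. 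Once the generator computation in the first paragraph is done carefully, the orthogonality and completeness follow the template of \cite{KTX2} with only notational changes, so I would state those steps and refer to \cite{KTX2, T2} rather than reproduce them.
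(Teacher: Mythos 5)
Your proposal is correct in outline and follows essentially the same route as the paper: your step of grouping the Fourier frequencies of $F$ in the $x$-variable into cosets indexed by $\mathbb{A}_k$ is precisely the paper's averaging of $G(x,u)=F(x,u,0)$ over the finite group $\mathbb{A}_k$ against its characters, and your identification of each coset component with $V_{k,j}f_j$ is the paper's final step, which rests on the $u$-quasi-periodicity forcing $c_m(u)=c_0(u+ml)$ among the Fourier coefficients. One small imprecision: left $\Lambda$-invariance of $e^{2\pi i j\cdot x}V_kf$ only forces $j\in\Z^n$ (the generators $(e_i,0)$ give $e^{2\pi i j\cdot p}=1$ for $p\in\Z^n$, nothing finer); the reduction to the finite index set $\mathbb{A}_k$ comes instead from the redundancy $H_{k,j}=H_{k,j'}$ whenever $j-j'\in 2k\Z\times 2kp_2\Z\times\cdots\times 2kp_n\Z$, since then the extra factor $e^{i\lambda ml\cdot x}$ can be absorbed into the sum defining $V_k$ by translating $f$ by $ml$ -- so $\mathbb{A}_k$ is the quotient of $\Z^n$ by that sublattice rather than a condition imposed by invariance.
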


The proof of this proposition depends on several results.
The orthogonality  can be proved by direct decomposition. In what follows we
only indicate how a function $ F \in H_k $ can be decomposed into a sum of
elements of $ H_{k,j}.$

Defining $G(x,u)=F(x,u,0) $ we see that the $\Lambda$ invariance of $F$
translates into the condition
$$ G\left(x+m,u+nl\right)=e^{i\frac{\lambda}{2}(m \cdot u- x\cdot nl)}G(x,u).$$
We will show that every $G(x,u)$ satisfying the above condition can be
further decomposed as
$G(x,u)=\sum_{j\in{\mathbb{A}}_{k}}G_j,$ where $G_j$ satisfies some extra
conditions.

To this end, we  define $ G_{j,m}(x,u)$ to be
$$ e^{-\pi i\sum^{n}_{j=1}\frac{m_j}{l_j}u_j}e^{-\frac{\pi i}{k}
\sum^{n}_{i=1}\frac{m_i}{p_i}j_i}
 G\left(x_1+\frac{1}{2k}m_1,...,x_n+\frac{1}{2kp_n}m_n,u\right) $$
and consider the sum
$\sum_{j\in{\mathbb{A}}_{k}}\sum_{m\in{\mathbb{A}}_{k}}G_{j,m}(x,u)$
which is given by
$$ \sum_{m\in{\mathbb{A}}_{k}}  e^{-\pi i\sum^{n}_{j=1}\frac{m_j}{l_j}u_j}
 G\left(x_1+\frac{1}{2k}m_1,...,x_n+\frac{1}{2kp_n}m_n,u \right)
\left(\sum_{j\in{\mathbb{A}}_{k}}
e^{-\frac{\pi i}{k}\sum^{n}_{i=1}\frac{m_i}{p_i}j_i} \right).$$
Since the sum
$$ \sum_{j\in{\mathbb{A}}_{k}}e^{\frac{-\pi i}{k}
\sum^{n}_{i=1}\frac{m_i}{p_i}j_i} = 0 $$
when $ ~m\neq 0 $ and equals $ (2k)\times...\times (2kp_n )$ when $ m=0 $
we have
$$ \sum_{j\in{\mathbb{A}}_{k}}\sum_{m\in{\mathbb{A}}_{k}}G_{j,m}(x,u) =
\prod^{n}_{i=1}\left(2kp_i\right)G(x,u).$$

Therefore,  by defining
$$ G_{j}(x,u) = \prod^{n}_{i=1}\left(2kp_i\right)^{-1}
\sum_{m\in{\mathbb{A}}_{k}}G_{j,m}(x,u)$$
we get the decomposition $G=\sum_{j\in{\mathbb{A}}_{k}}G_j.$

We now claim that
$ G_j $ satisfies the extra condition
$$ G_{j}\left(x_1+\frac{1}{2k}d_1,...,x_n+\frac{1}{2kp_n}d_n,u\right) =
e^{\pi i\frac{d}{l}\cdot u} e^{\frac{\pi i}{k}\frac{d}{p}\cdot{j}}G_{j}(x,u).$$
To see this, we first observe that
$$  G_{j}\left(x_1+\frac{1}{2k}d_1,...,x_n+\frac{1}{2kp_n}d_n,u\right) =
e^{\pi i\frac{d}{l}\cdot u} e^{\frac{\pi i}{k}\frac{d}{p}\cdot{j}}
\sum_{m\in{\mathbb{A}}_{k}}G_{j,m+d}(x,u).$$ From the  definition of $ G_j $
it follows, using the quasi-periodicity of $ G $, that
$$ \sum_{m\in{\mathbb{A}}_{k}}G_{j,m}(x,u) = \sum_{m\in{\mathbb{A}}_{k}}
G_{j,m+d}(x,u) $$ and hence the claim is proved.

In order to complete the proof of the proposition we need to show that each
$ F_j(x,u,\xi) = e^{i\lambda(k)\xi}G_j(x,u) $ where $ \lambda(k) =
\frac{4\pi k}{l_1} $ can be written as $ F_j = V_{k,j}f_j $ for some
$ f_j \in L^2(\R^n).$ In order to prove this we need the following two
propositions. The first proposition deals with tempered distributions $ \nu $
on $ \R^n $ that are invariant under $ \rho_k(\Lambda) $ where $ \rho_k =
\pi_{\lambda(k)}.$  If $ \nu $ is such a ditribution then
$F(x,u,\xi)=\left(\nu,\pi_{\lambda}(x,u,\xi)f\right)$ where $f$ is
a Schwartz function on $\R^n $ gives a $ \Lambda $ invariant
function on $ \H^n.$ In view of this the following proposition plays an
important role in the decomposition of $ H_k.$

\begin{prop} Every tempered distribution $\nu$ invariant under
$\rho_k(\Lambda)$ is of the form
$\nu=\sum_{j\in {\mathbb{A}}_{k}}c_j\nu_j$ with $\nu_j$ defined by,
$$\left(\nu_j,f\right)=\sum_{m\in\Z^n}{\hat{f}\left(\frac{j_1+2km_1}{l_1},
\frac{j_2+2km_2p_2}{l_2},...,\frac{j_n+2km_np_n}{l_n}\right)}$$
Here $\hat{f}$ denotes the Fourier transform of the Schwartz class
function $f.$
\end{prop}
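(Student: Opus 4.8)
The plan is to translate the $\rho_k(\Lambda)$-invariance into a support-and-periodicity statement for $\widehat\nu$. Since $\pi_{\lambda(k)}$ restricted to $\Lambda$ is an honest unitary representation, a tempered distribution is $\rho_k(\Lambda)$-invariant as soon as it is fixed by $\rho_k$ on a generating set, and $\Lambda$ is generated by the $2n$ elements $\widetilde{e_1},\dots,\widetilde{e_{2n}}$ (the central generator $(0,0,\tfrac{l_1}{2})$ is itself a commutator, and in any case acts by the scalar $e^{i\lambda(k)l_1/2}=e^{2\pi ik}=1$). The generator $\widetilde{e_i}=(e_i,0)$, $1\le i\le n$, acts on $L^2(\R^n)$ by multiplication by $e^{i\lambda(k)v_i}=e^{4\pi ikv_i/l_1}$, while $\widetilde{e_{n+j}}=(l_je_{n+j},0)$, $1\le j\le n$, acts by translation by $l_j$ in the $j$-th coordinate of $\R^n$. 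Hence $\nu$ is $\rho_k(\Lambda)$-invariant if and only if (i) $\nu$ is periodic with respect to the lattice $l_1\Z\times\cdots\times l_n\Z$ in $\R^n$, and (ii) $e^{4\pi ikv_i/l_1}\nu=\nu$ for each $i=1,\dots,n$.

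First I would exploit (i): a tempered distribution periodic with respect to a full-rank lattice has Fourier transform supported on the dual lattice, which here is $\Delta=\frac1{l_1}\Z\times\cdots\times\frac1{l_n}\Z$, and is in fact of the form $\widehat\nu=\sum_{\alpha\in\Delta}c_\alpha\delta_\alpha$ with coefficients $(c_\alpha)$ of at most polynomial growth. This is the standard structure theorem for periodic tempered distributions (restrict $\nu$ to a fundamental domain, expand in a Fourier series, and bound the Fourier coefficients using temperedness), and it is the place where the hypothesis on $\nu$ is genuinely used. Condition (ii) then says, after Fourier transform, that $\widehat\nu$ is invariant under the translations $\xi\mapsto\xi+\frac{2k}{l_1}e_i$, $i=1,\dots,n$; equivalently the coefficient $c_\alpha$ depends only on the class of $\alpha$ in the finite quotient $\Delta/\Delta_0$, where $\Delta_0=\frac{2k}{l_1}\Z\times\cdots\times\frac{2k}{l_1}\Z$.

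It remains to identify $\Delta/\Delta_0$ with ${\mathbb A}_k$ and read off the formula. With $p_i=l_i/l_1$ one has $\frac{2k}{l_1}=\frac{2kp_i}{l_i}$, so in the $i$-th coordinate $\Delta/\Delta_0\cong\frac1{l_i}\Z\big/\frac{2kp_i}{l_i}\Z\cong\Z/2kp_i\Z$, and taking the product over $i$ (with $p_1=1$) gives exactly ${\mathbb A}_k=\Z/2k\Z\times\Z/2kp_2\Z\times\cdots\times\Z/2kp_n\Z$. Concretely, $(j,m)\mapsto\alpha(j,m)=\big(\tfrac{j_1+2km_1}{l_1},\tfrac{j_2+2km_2p_2}{l_2},\dots,\tfrac{j_n+2km_np_n}{l_n}\big)$ is a bijection of ${\mathbb A}_k\times\Z^n$ onto $\Delta$, with $j\in{\mathbb A}_k$ labelling the cosets and $m\in\Z^n$ parametrising each coset. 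Therefore $\widehat\nu=\sum_{j\in{\mathbb A}_k}c_j\sum_{m\in\Z^n}\delta_{\alpha(j,m)}$, and pairing against a Schwartz function $f$ and applying Fourier inversion (in the normalisation used here) gives $(\nu,f)=\sum_{j\in{\mathbb A}_k}c_j\,(\nu_j,f)$ with $(\nu_j,f)=\sum_{m\in\Z^n}\widehat f(\alpha(j,m))$, the asserted form. Conversely, each $\nu_j$ so defined is $\rho_k(\Lambda)$-invariant, since its Fourier transform is supported on a single $\Delta_0$-coset inside $\Delta$, so the description is exhaustive.

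The step I expect to carry the weight of the argument is the structure theorem invoked for (i); everything after that is lattice bookkeeping and a reindexing of a sum. Two points warrant care: the passage from ``periodic tempered distribution'' to ``weighted sum of point masses on $\Delta$ with polynomially bounded weights'' genuinely requires temperedness in order to control the coefficients; and one must make sure the Fourier transform and the pairing $(\cdot,\cdot)$ are normalised as in the earlier part of the paper, so that no extraneous constants or coordinate reflections creep into the final expression for $\nu_j$.
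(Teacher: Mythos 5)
Your argument is correct, and it is essentially the proof the paper has in mind: the paper omits the argument entirely, deferring to Proposition 3.1 of \cite{KTX2}, whose proof runs exactly along your lines (periodicity under the translation generators forces $\widehat{\nu}$ to be a polynomially weighted sum of point masses on the dual lattice $\Delta=\prod_i \frac{1}{l_i}\Z$, and invariance under the multiplication generators forces the weights to be constant on cosets of $\Delta_0$, with $\Delta/\Delta_0\cong{\mathbb A}_k$ giving the finite index set). The only slip is the parenthetical claim that $(0,0,\tfrac{l_1}{2})$ is a commutator --- commutators of lattice elements only generate $(0,l_1\Z)$, the element $\tfrac{l_1}{2}$ arising instead from products such as $\widetilde{e_1}\,\widetilde{e_{n+1}}$ --- but this is harmless since, as you note, the central elements act trivially under $\rho_k$ anyway.
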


The proof of the above proposition is very similar to that of
Proposition 3.1 in ~\cite{KTX2} except for some technicalities,
so we skip it here.

We can also show that  the matrix
coefficients $\left(\nu_j,f\right)$ can be expressed interms of the
Weil-Brezin transforms $ V_{k,j}.$

\begin{prop} For each $f\in {\mathcal{S}}(\R^n)$
$$F_j(x,u,\xi)=V_{k,j}g_{j}\left(\frac{u}{l},-lx,\xi\right)$$
where $\frac{u}{l}=\left(\frac{u_1}{l_1},...,\frac{u_n}{l_n}\right),$
$lx=\left(l_1x_1,l_2x_2,...,l_nx_n\right)$ and $f$ and $g_j$ are
related by
$$g_j\left(l_1s_1,...,l_ns_n\right)=\hat{f}\left(\frac{j_1}{l_1}+\frac{2k}{l_1}\
s_1,...,\frac{j_n}{l_n}+\frac{2k}{l_1}s_n\right).$$
\end{prop}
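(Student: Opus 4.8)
The plan is to compute both sides of the asserted identity explicitly and to check that they coincide term by term. Writing $\lambda=\lambda(k)=\frac{4\pi k}{l_1}$ and recalling (from the previous proposition, via which $F_j$ is realised as the matrix coefficient $F_j(x,u,\xi)=\left(\nu_j,\pi_\lambda(x,u,\xi)f\right)$, up to a harmless normalising constant) the first step is to work out the Fourier transform of the Schwartz function $\phi:=\pi_\lambda(x,u,\xi)f$. Since $\pi_\lambda(x,u,\xi)$ acts on $f$ by the translation $f(\,\cdot\,+u)$ followed by the modulation $e^{i\lambda x\cdot v}$ and a unimodular constant, a one-line change of variables $w=v+u$ in the defining integral (the translation and modulation swapping roles under $\wh{\;}$) yields
$$\hat\phi(\eta)=e^{i\lambda\xi}\,e^{-\frac{i\lambda}{2}x\cdot u}\,e^{2\pi i u\cdot\eta}\,\hat f\!\left(\eta-\tfrac{\lambda}{2\pi}x\right).$$
Evaluating at the shifted lattice points $\eta_m:=\left(\frac{j_1+2km_1}{l_1},\frac{j_2+2km_2p_2}{l_2},\dots,\frac{j_n+2km_np_n}{l_n}\right)$, $m\in\Z^n$, that appear in the definition of $\nu_j$, and summing over $m$, exhibits $F_j(x,u,\xi)$ as an absolutely (and locally uniformly) convergent series.

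Next I would expand the right-hand side $V_{k,j}g_j\!\left(\frac{u}{l},-lx,\xi\right)$ directly from the definitions $V_{k,j}h=e^{2\pi ij\cdot x}V_kh$ and the formula for $V_k$, substituting $x\mapsto\frac{u}{l}$, $u\mapsto-lx$, $\xi\mapsto\xi$. Here one reads the stated relation $g_j(l_1s_1,\dots,l_ns_n)=\hat f\!\left(\frac{j_1}{l_1}+\frac{2k}{l_1}s_1,\dots,\frac{j_n}{l_n}+\frac{2k}{l_1}s_n\right)$ as the \emph{definition} of an honest Schwartz function $g_j$ on all of $\R^n$ (it is $\hat f$ composed with an invertible diagonal affine map), so that the $m$-th term of the Weil--Brezin sum becomes $g_j(-lx+ml)=g_j\big(l_1(m_1-x_1),\dots,l_n(m_n-x_n)\big)=\hat f\!\left(\frac{j_i}{l_i}+\frac{2k}{l_1}(m_i-x_i)\right)_{\!i}$. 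Collecting the phase factors, one has $e^{2\pi ij\cdot(u/l)}=e^{2\pi i\sum_i j_iu_i/l_i}$, $\frac{u}{l}\cdot(-lx)=-x\cdot u$, and $\sum_t l_tm_t\frac{u_t}{l_t}=m\cdot u$, so that $e^{i\lambda m\cdot u}=e^{2\pi i\frac{2k}{l_1}m\cdot u}$.

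The final step is the comparison. Using $p_i=l_i/l_1$ one sees that $\frac{j_i}{l_i}+\frac{2km_i}{l_1}$ is exactly the $i$-th coordinate of $\eta_m$, so the two surviving exponentials in $u$ combine to $e^{2\pi iu\cdot\eta_m}$, and likewise $\frac{j_i}{l_i}+\frac{2k}{l_1}(m_i-x_i)=\big(\eta_m\big)_i-\frac{\lambda}{2\pi}x_i$, so the $m$-th term of the Weil--Brezin sum is precisely $\hat f(\eta_m-\frac{\lambda}{2\pi}x)$. Hence $V_{k,j}g_j\!\left(\frac{u}{l},-lx,\xi\right)$ equals the very series $e^{i\lambda\xi}e^{-\frac{i\lambda}{2}x\cdot u}\sum_{m\in\Z^n}e^{2\pi iu\cdot\eta_m}\hat f\!\left(\eta_m-\frac{\lambda}{2\pi}x\right)$ obtained above for $F_j(x,u,\xi)$, which proves the proposition. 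I expect the only real obstacle to be notational: one must keep the anisotropic scaling by $l=(l_1,\dots,l_n)$, the normalisation of the Fourier transform, and the identity $\frac{2km_i}{l_1}=\frac{2km_ip_i}{l_i}$ straight throughout, and be careful that $g_j$ is regarded as a genuine function on $\R^n$ so that all the substitutions and the interchange of the $\nu_j$-sum with $\pi_\lambda(x,u,\xi)$ are legitimate.
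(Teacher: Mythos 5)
Your computation is correct and is exactly the argument the paper defers to \cite{KTX2}: expanding the matrix coefficient $F_j(x,u,\xi)=\left(\nu_j,\pi_\lambda(x,u,\xi)f\right)$ via $\widehat{\pi_\lambda(x,u,\xi)f}(\eta)=e^{i\lambda\xi}e^{-\frac{i\lambda}{2}x\cdot u}e^{2\pi i u\cdot\eta}\hat f\left(\eta-\frac{\lambda}{2\pi}x\right)$ at the points $\eta_m$, and matching it term by term (using $p_i=l_i/l_1$ and $\frac{\lambda}{2\pi}=\frac{2k}{l_1}$) with the Weil--Brezin sum for $V_{k,j}g_j\left(\frac{u}{l},-lx,\xi\right)$. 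The phases and arguments all check out under the $e^{-2\pi i v\cdot\eta}$ Fourier normalisation implicit in Proposition 2.5, so this is a complete proof along the same lines as the standard-lattice case.
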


For a proof this proposition in the case of the standard lattice
we refer to \cite{KTX2}. The same proof can be modified to suit
the present case. Making use of the above two propositions we can
now complete the proof of proposition 2.4. Again the proof is
similar to that of the standard lattice case. We provide a proof
just for the sake of completeness.

It only  remains to show that $G_j$ can be written as,
$$ G_j(x,u)=V_{k,j}f_j(x,u,0), ~~f_j\in L^{2}(\R^n).$$
To prove this we consider,
$$ g_j(x,u)=e^{-2\pi ij\cdot x}e^{-i\frac{\lambda}{2}x\cdot
 u}G_{j}(x,u)$$
where $\lambda=\frac{4\pi k}{l_1}$ $x=(x_1,x_2,...,x_n).$ It is clear that
$g_j$ is $\left(\frac{1}{2k},\frac{1}{2kp_2},...,\frac{1}{2kp_n}\right)$
 periodic in the $x$ variables. Therefore, it admits an expansion of
 the form
$$g_{j}(x,u)=\sum_{m\in\Z^n}c_{m}(u)e^{i\lambda ml\cdot
 x}$$
where $ml=\left(m_1l_1,m_2l_2,...,m_nl_n\right)$ and the
Fourier coefficients are given by,
$$ c_{m}(u)=\int_{[0,\frac{1}{2k})}...\int_{[0,\frac{1}{2kp_n})}\
g_{j}(x,u)e^{-i\lambda ml\cdot
 x}dx.$$
Now the transformation property of $G_j(x,u)$ leads to
$$ g_{j}(x,u-ml)=e^{i\lambda ml\cdot x}g_{j}(x,u).$$
From this relation it is then obvious that $$c_{m}(u)=c_{0}(u+ml).$$ Hence
we have
\begin{eqnarray}G_{j}&=&e^{2\pi ij\cdot x}e^{i\frac{\lambda}{2}x\cdot u}
\sum_{m\in\Z^n} c_{0}(u+ml)e^{i\lambda ml\cdot x}\\
 &=&V_{k,j}f_j(x,u,0)\end{eqnarray}
where $f_j=c_0$ and $c_0\in L^{2}(\R^n).$

\subsection{Heat kernel transform on $\Gamma(\mathbf{l})\backslash \H^n$}

In this section we consider the image of $
L^2(\Gamma(\mathbf{l})\backslash \H^n) $ under the heat kernel transform.
Let $ \Delta $ stand for the standard left invariant Laplacian on $ \H^n $
and let $ k_t $ be the associated heat kernel. It is explicitly given by
$$ k_t(x,u,\xi) = c_n \int_{-\infty}^\infty \left(
\frac{\lambda}{\sinh t\lambda}\right)^n e^{-t\lambda^2} e^{i\lambda \xi}
e^{-\frac{1}{4}(\coth t\lambda)(x^2+u^2)} d\lambda.$$
The heat semigroup is defined by $ S_tf = f*k_t $ and for $ f \in L^2(\H^n)$
it can be shown that $ S_tf $ extends to $ \C^{2n+1} $ as an entire function.
This transform taking $ f $ into the entire function $ S_tf $ is called the
heat kernel transform.

The image of $ L^2(\H^n) $ under the heat kernel transform has been studied
in \cite{KTX1}. One can also restrict the heat kernel transform to $ L^2(M) $
where $ M $ is a nilmanifold and ask for a characterisation of the image. When
$ M $ is the nilmanifold associated to the standard lattice this has been done
in \cite{KTX2}. Here we take up the general case. In view of the structure of
Heisenberg lattices, it is enough to look at the case $ \Lambda =
\Gamma(\mathbf{l}).$  Let $ S_t^\Lambda $ stand for the heat kernel transform
restricted to $ L^2(\Lambda \backslash \H^n).$ It is easy to see that
$ S_t^\Lambda $ leaves each of $ H_k $ invariant and hence it is enough to
characterise $ S_t^\Lambda(H_k) $ for each $ k =0.$ We assume $ k \neq 0 $ as
the case $ k =0 $ can be handled as in \cite{KTX2}.

In order to describe  $S_t^\Lambda(H_k)$ we define certain spaces of entire
functions. Let $ \Upsilon=\Z^n\times\left(l_1\Z\times l\
_2\Z\times...\times l_n\Z\right).$ We let
${\mathcal{H}}_{k}(\R^{2n},\Upsilon)$ stand for  the space
of all functions $F(x,u)$ for which $e^{i\lambda\xi}F(x,u)\in H_k.$
These functions are characterised by the property
$$F(x+m,u+nl)=e^{i\lambda(u\cdot m-x\cdot nl)}F(x,u)$$
for $(m,nl)\in \Upsilon.$ Then $k\neq 0$ we define
${\mathcal{H}}^{t}_{k}\left({\mathbb{C}}^{2n},\Upsilon\right)$ to
be the space all functions in ${\mathcal{H}}_{k}(\R^{2n},\Upsilon)$
having entire extension to ${\mathbb{C}}^{2n}$ and satisfying
$$||F||^{2}_{k,t}=\int_{\R^{2n}}\left( \int_{Q(l)}\int_{Q}
|F(z,w)|^{2}W^{\lambda(k)}_{t}(z,w) du dx \right) dydv <\infty $$
where $ Q(l) = [0,l_1)\times ...\times[0,l_n)$, $ Q = [0,1)^n $ and
$$ W_t^\lambda(k)(z,w) = 2^ne^{i\lambda(k)(u\cdot y-v\cdot x)} e^{-\lambda(k)
\coth (t\lambda(k))(y^2+v^2)} .$$
For each $ j \in {\mathbb{A}}_{k} $ we also define the spaces
$ {\mathcal{H}}^{t}_{k,j}\left({\mathbb{C}}^{2n},\Upsilon\right)$ as the
subspaces of
${\mathcal{H}}^{t}_{k}\left({\mathbb{C}}^{2n},\Upsilon\right)$ satisfying the
extra condition
$$ G\left(z_1+\frac{1}{2k}d_1,...,z_n+\frac{1}{2kp_n}d_n,w\right) =
e^{\pi i\frac{d}{l}\cdot u}e^{\frac{\pi i}{k}\frac{d}{p}\cdot{j}}G(z,w)$$
where $\frac{d}{p}\cdot{j}=\sum^{n}_{i=1}\frac{d_i}{p_i}j_i$ and
$\frac{d}{l}\cdot{u}=\sum^{n}_{i=1}\frac{d_i}{l_i}u_i.$ Then we have

\begin{prop}${\mathcal{H}}^{t}_{k}\left({\mathbb{\
C}}^{2n},\Upsilon\right)$ is the orthogonal direct sum of
$ {\mathcal{H}}^{t}_{k,j}\left({\mathbb{C}}^{2n},\Upsilon\right)$ as $j$ varies over ${\mathbb{A}}_{k}.$
\end{prop}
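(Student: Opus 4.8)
The plan is to carry over to the Bergman space ${\mathcal H}^{t}_{k}({\mathbb C}^{2n},\Upsilon)$ the argument that decomposes $H_k$ as $\bigoplus_{j\in\mathbb{A}_k}H_{k,j}$ in Proposition 2.4; in both cases the splitting is driven by the regular representation of the finite abelian group $\mathbb{A}_k$. For $d\in\mathbb{A}_k$ I would introduce, on entire functions of $(z,w)\in{\mathbb C}^{2n}$, the twisted translation
\[
(T_dF)(z,w)=e^{-\pi i\frac{d}{l}\cdot w}\,F\bigl(z_1+\tfrac{1}{2k}d_1,\dots,z_n+\tfrac{1}{2kp_n}d_n,\,w\bigr),
\]
where the phase is chosen so that the extra condition cutting out ${\mathcal H}^{t}_{k,j}$ becomes exactly $T_dF=e^{\frac{\pi i}{k}\frac{d}{p}\cdot j}F$ for all $d\in\mathbb{A}_k$. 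Thus ${\mathcal H}^{t}_{k,j}$ is the isotypic component of $\{T_d\}$ for the character $d\mapsto e^{\frac{\pi i}{k}\frac{d}{p}\cdot j}$ of $\mathbb{A}_k$ — the pairing $(j,d)\mapsto e^{\frac{\pi i}{k}\frac{d}{p}\cdot j}$ being non-degenerate, these characters exhaust $\widehat{\mathbb{A}_k}$ — and the proposition reduces to showing that $d\mapsto T_d$ is a (genuine, not merely projective) unitary representation of $\mathbb{A}_k$ on ${\mathcal H}^{t}_{k}({\mathbb C}^{2n},\Upsilon)$: orthogonality of distinct isotypic components and the fact that they span are then standard, and, since all sums are finite, no completeness of ${\mathcal H}^{t}_{k}$ is needed.

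The first task is to check that each $T_d$ is a well-defined unitary operator of ${\mathcal H}^{t}_{k}$ onto itself. Holomorphy of $T_dF$ and $T_dT_{d'}=T_{d+d'}$ are immediate. That $T_d$ depends only on the residue of each $d_i$ modulo $2kp_i$, and that $T_dF$ again obeys the $\Upsilon$-quasi-periodicity defining ${\mathcal H}_k({\mathbb R}^{2n},\Upsilon)$, both follow from that quasi-periodicity: a unit shift of $z$ (an element of $\mathbb{Z}^{n}\subset\Upsilon$), when absorbed, produces precisely the phase that the prefactor $e^{-\pi i\frac{d}{l}\cdot w}$ contributes when $d_i$ is raised by $2kp_i$ — here one uses $l_i=p_il_1$ and $\lambda(k)=\frac{4\pi k}{l_1}$ — while $\frac{\pi i}{k}\frac{d}{p}\cdot j$ then changes by $2\pi ij_i$; and under $w\mapsto w+nl$ the prefactor phase and the phase from the $\omega$-pairing of the translation $\tfrac{d}{2kp}$ with $nl$ are each $(-1)^{\sum_id_in_i}$ and so cancel. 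Finally, for $\|T_dF\|_{k,t}=\|F\|_{k,t}$ I would substitute $x\mapsto x+\tfrac{d}{2kp}$ in the inner integral over $Q$, fold the translated cube $Q+\tfrac{d}{2kp}$ back to $Q$ by the $\mathbb{Z}^{n}$-periodicity of the integrand, and thereby reduce to the identity $|e^{-\pi i\frac{d}{l}\cdot w}|^{2}\,W^{\lambda(k)}_{t}\bigl(z-\tfrac{d}{2kp},w\bigr)=W^{\lambda(k)}_{t}(z,w)$ together with the invariance of $W^{\lambda(k)}_{t}$ under $\Upsilon$ — direct computations with the explicit $W^{\lambda(k)}_{t}$, of the same type as the corresponding step in \cite{KTX2}.

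Granting this, the rest copies Proposition 2.4. For $F\in{\mathcal H}^{t}_{k}$ set
\[
F_j=\prod_{i=1}^{n}(2kp_i)^{-1}\sum_{d\in\mathbb{A}_k}e^{-\frac{\pi i}{k}\frac{d}{p}\cdot j}\,T_dF,\qquad j\in\mathbb{A}_k .
\]
Each $F_j$ lies in ${\mathcal H}^{t}_{k}$ as a finite combination of the $T_dF$; the character orthogonality on $\mathbb{A}_k$ used already in the proof of Proposition 2.4 gives $F=\sum_{j\in\mathbb{A}_k}F_j$; and applying $T_e$ to $F_j$ only reindexes the inner sum, so $T_eF_j=e^{\frac{\pi i}{k}\frac{e}{p}\cdot j}F_j$ for every $e$, i.e. $F_j\in{\mathcal H}^{t}_{k,j}$. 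For orthogonality, if $F\in{\mathcal H}^{t}_{k,j}$, $G\in{\mathcal H}^{t}_{k,j'}$ and $j\neq j'$, unitarity of $T_d$ gives $\la F,G\ra_{k,t}=\la T_dF,T_dG\ra_{k,t}=e^{\frac{\pi i}{k}\frac{d}{p}\cdot(j-j')}\la F,G\ra_{k,t}$ for all $d\in\mathbb{A}_k$, and averaging over $d$ forces $\la F,G\ra_{k,t}=0$. Hence ${\mathcal H}^{t}_{k}({\mathbb C}^{2n},\Upsilon)$ is the orthogonal direct sum of the ${\mathcal H}^{t}_{k,j}({\mathbb C}^{2n},\Upsilon)$.

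I expect the only genuine obstacle to be the unitarity of the $T_d$ on the weighted space, that is the invariance of $W^{\lambda(k)}_{t}$ under the twisted translations and under $\Upsilon$; all the rest is the character theory of the finite group $\mathbb{A}_k$ and runs exactly as in Proposition 2.4. I would also remark that, once the image theorem $S^{\Lambda}_t(H_k)=c\,{\mathcal H}^{t}_{k}({\mathbb C}^{2n},\Upsilon)$ has been proved and shown to carry the grading $H_k=\bigoplus_jH_{k,j}$ onto the family $\{T_d\}$, this analytic point — and with it the whole proposition — follows at once from Proposition 2.4, which is probably the most economical way to present it.
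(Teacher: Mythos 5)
The paper offers no proof of this proposition at all --- it defers to \cite{KTX2} and leaves ``the required modifications to the reader'' --- so there is no argument of the authors to set yours against; what you propose is the natural direct proof and it is the right one. It transports to the weighted space exactly the averaging-over-$\mathbb{A}_k$ scheme that the paper does carry out for Proposition 2.4: your $T_d$ are the Bergman-space analogues of the shifts $G\mapsto G_{j,m}$ there, the pairing $(j,d)\mapsto e^{\frac{\pi i}{k}\frac{d}{p}\cdot j}$ is indeed a nondegenerate pairing on $\mathbb{A}_k=\prod_{i}\Z/2kp_i\Z$ so the characters you use exhaust the dual, and the projection, reindexing and orthogonality bookkeeping is correct and self-contained. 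Note that it is precisely your averaging construction that delivers the \emph{spanning} statement; the shortcut you mention at the end, deducing the proposition from Theorem 2.9 and Proposition 2.4, would additionally require surjectivity of $S_t^\Lambda$ from $H_k$ onto all of ${\mathcal H}^t_k(\C^{2n},\Upsilon)$, which is not literally contained in Theorem 2.9 as stated, so the direct route is the safer one logically.

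The one step you defer --- unitarity of $T_d$ for $\|\cdot\|_{k,t}$ --- is, as you say, the only analytic content, but be warned that if you run it against the formulas as printed in this paper it will not close. The cross term of $W_t^{\lambda(k)}$ is printed as the unimodular phase $e^{i\lambda(k)(u\cdot y-v\cdot x)}$, whereas the identity you need, $|e^{-\pi i\frac{d}{l}\cdot w}|^2\,W_t^{\lambda(k)}\left(z-\frac{d}{2kp},w\right)=W_t^{\lambda(k)}(z,w)$, requires a \emph{real} exponential cross term (as in the twisted Bergman weights of \cite{KTX1,KTX2}) whose sign is matched to the phase $e^{\pi i\frac{d}{l}\cdot u}$ in the defining condition of ${\mathcal H}^t_{k,j}$: with the printed $i$, the left-hand side carries the non-unimodular factor $e^{2\pi\frac{d}{l}\cdot v}$ coming from $|e^{-\pi i\frac{d}{l}\cdot w}|^2$ and only a unimodular phase is available to meet it. Similarly, the $(-1)^{\sum_i d_i n_i}$ cancellation you describe under $w\mapsto w+nl$ works with the quasi-periodicity factor $e^{\frac{i\lambda}{2}(m\cdot u-x\cdot nl)}$ of Section 2.2 but not with the factor $e^{i\lambda(u\cdot m-x\cdot nl)}$ printed in Section 2.3. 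These are evidently internal inconsistencies of the paper rather than flaws in your plan, but your proof is complete only once a consistent set of conventions is fixed and that computation is actually written out.
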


For a proof of this proposition and also for the proof of the following we
refer to \cite{KTX2}. The required modifications are left to the reader.
The relation between $ {\mathcal{H}}^{t}_{k,j}\left({\mathbb{C}}^{2n},
\Upsilon\right)$ and $ S_t^\Lambda $ is as
follows.

\begin{thm} An entire function $F(z,w)$ belongs to
${\mathcal{H}}^{t}_{k,j}\left({\mathbb{C}}^{2n},\Upsilon\right)$
if and only if $F(z,w)=e^{t\lambda^2}V_{k,j}f\ast k_t(z,w,0) $
for some $f\in L^{2}(\R^n).$
\end{thm}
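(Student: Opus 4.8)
The plan is to characterize the image $S_t^\Lambda(H_{k,j})$ directly, using the factorization $F = V_{k,j}f$ established in Proposition 2.4 together with the known description of the heat kernel transform on $L^2(\H^n)$ from \cite{KTX1}. Recall that for $f \in L^2(\R^n)$ the function $V_{k,j}f$ lies in $H_{k,j} \subset L^2(M)$, and the heat kernel transform is $S_t^\Lambda(V_{k,j}f) = V_{k,j}f * k_t$. Since $k_t$ is the convolution kernel of the heat semigroup on $\H^n$ and convolution on the right by $k_t$ commutes with the left action of $\Lambda$, the function $V_{k,j}f * k_t$ remains $\Lambda$-invariant; moreover it is a restriction to $L^2(M)$ of an entire function on $\C^{2n+1}$ by the same argument as on $\H^n$. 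Writing $F(z,w) = e^{t\lambda^2}\, V_{k,j}f * k_t(z,w,0)$ for the value at $\xi = 0$, the extra factor $e^{t\lambda^2}$ is inserted (exactly as in \cite{KTX2}) to absorb the $e^{-t\lambda^2}$ appearing in the explicit formula for $k_t$, so that the resulting Gaussian weight matches $W_t^{\lambda(k)}$.

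The first step is the forward inclusion: given $f \in L^2(\R^n)$, show that $F(z,w) = e^{t\lambda^2} V_{k,j}f * k_t(z,w,0)$ satisfies the quasi-periodicity defining $\CH_k(\R^{2n},\Upsilon)$ and the extra $j$-condition defining $\CH_{k,j}^t(\C^{2n},\Upsilon)$, and that $\|F\|_{k,t} < \infty$. The quasi-periodicity under $\Upsilon$ follows because $V_{k,j}f$ is already $\Lambda$-invariant and right convolution by $k_t$ preserves this; the extra condition for index $j$ is inherited from the corresponding condition on $G_j = V_{k,j}f(\cdot,\cdot,0)$ proved in Section 2.2, again because it is a left-translation-type relation that commutes with right convolution. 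For the norm estimate, one computes $V_{k,j}f * k_t$ explicitly — plugging in the series formula for $V_{k,j}f$ and the integral formula for $k_t$ — reducing the computation over the fundamental domain $Q(l) \times Q$ to a Gaussian integral that, by Plancherel/Parseval on $\R^n$ together with the generating-function identity for Hermite functions (as in \cite{KTX1}), evaluates to a constant multiple of $\|f\|_2^2$. This is essentially the same calculation carried out in \cite{KTX2} for the standard lattice, with the scaling factors $l_j$ and $p_i$ bookkept carefully.

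The second step is the converse: starting from an arbitrary $F \in \CH_{k,j}^t(\C^{2n},\Upsilon)$, recover an $f \in L^2(\R^n)$ with $F(z,w) = e^{t\lambda^2} V_{k,j}f * k_t(z,w,0)$. Here I would restrict $F$ to $\R^{2n}$, use the characterization of $\CH_k(\R^{2n},\Upsilon)$ and the extra $j$-condition to place $F|_{\R^{2n}}$ in $H_{k,j}$, and apply Proposition 2.4 (more precisely its proof via the Fourier-series expansion of $g_j(x,u) = e^{-2\pi i j\cdot x} e^{-i(\lambda/2)x\cdot u} F(x,u)$) to write $F|_{\R^{2n}} = V_{k,j}h$ for a unique $h \in L^2(\R^n)$. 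The content of the converse is then the surjectivity statement from the Heisenberg case \cite{KTX1}: every entire function with the correct Bergman-type growth is $e^{t\lambda^2}$ times the heat kernel transform of some $L^2$ function, so one writes $h = e^{t\lambda^2} u_0 * k_t(\cdot)$-type relation at the level of $\R^n$ via the partial Fourier transform in $\xi$ and the Hermite semigroup, and sets $f$ to be the corresponding preimage. Uniqueness of $f$ follows from injectivity of $V_{k,j}$ and of the heat kernel transform.

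The main obstacle is the norm computation in the forward direction: one must verify that the weight $W_t^{\lambda(k)}(z,w) = 2^n e^{i\lambda(k)(u\cdot y - v\cdot x)} e^{-\lambda(k)\coth(t\lambda(k))(y^2+v^2)}$ is exactly the right one so that $\|F\|_{k,t}^2 = c\|f\|_2^2$, which requires correctly tracking how the lattice parameters $\mathbf{l}$ and the ratios $p_i = l_i/l_1$ interact with the $\coth(t\lambda)$ factor coming from $k_t$ and with the phase $e^{-(1/4)\coth(t\lambda)(x^2+u^2)}$. Once the bookkeeping is set up so that, after integrating over the compact fibre $Q(l)\times Q$ in the $(x,u)$ variables, the cross terms cancel and the remaining integral in $(y,v)$ over $\R^{2n}$ reproduces the $L^2(\R^n)$ norm of $f$ through the Hermite generating function, both directions follow as in \cite{KTX2}. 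Accordingly, as the authors indicate, the detailed verification is a direct adaptation of the standard-lattice argument, and I would state the theorem with the computation reduced to that reference, spelling out only the points where the scaling by $\mathbf{l}$ changes the constants.
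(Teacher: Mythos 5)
Your proposal is correct and takes essentially the same route as the paper, which itself gives no argument beyond citing the standard-lattice case in Kr\"otz--Thangavelu--Xu: both rest on factoring the heat kernel transform through the Weil--Brezin transform $V_{k,j}$ and on the Hermite--Bergman norm identity $\|F\|_{k,t}^2 = c\|f\|_2^2$ (the intertwining property, Proposition 4.6 of \cite{KTX2}), with the converse recovered from Proposition 2.4 and the surjectivity of the Hermite semigroup onto the Hermite--Bergman space. The only content beyond that reference is the bookkeeping of the lattice parameters $\mathbf{l}$ and $p_i$, which you already identify as the point requiring care.
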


The proof of this theorem given in \cite{KTX2} for the standard lattice case
makes use of Hermite-Bergman spaces. The Weil-Brezin transforms can be defined
on these Hermite-Bergman spaces and they intertwine the heat kernel transform. We refer to Proposition 4.6 in \cite{KTX2}. Finally the image of $ L^2(\Lambda
\backslash \H^n) $ under $ S_t^\Lambda $ can be described as follows.

\begin{thm} The image of $L^{2}\left(\Lambda\backslash\H^n\right)$ under
$S_t^\Lambda $ is the direct sum of $ e^{i\lambda(k)\zeta}
{\mathcal{H}}^{t}_{k,j}(\C^{2n},\Upsilon),$ $ k \in\Z,
~j\in{\mathbb{A}}_{k}$; that is,
$$S_{t}^\Lambda \left(L^{2}(\Lambda\backslash\H^n)\right)=
\sum^{\infty}_{k=-\infty}\sum_{j\in{\mathbb{A}}_{k}}e^{2t\lambda(k)^2}
e^{i\lambda(k)\zeta}{\mathcal{H}}^{t}_{k,j}(\C^{2n},\Upsilon).$$
\end{thm}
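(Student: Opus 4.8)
The plan is to assemble the final statement from the three structural decompositions already established, threading the heat kernel transform through each level. First I would recall the orthogonal decomposition $L^2(\Lambda\backslash\H^n)=\sum_{k\in\Z}\oplus H_k(\Lambda)$ coming from the periodicity in the central variable, together with the fact (noted just above) that $S_t^\Lambda$ leaves each $H_k$ invariant. Thus it suffices to identify $S_t^\Lambda(H_k)$ for each fixed $k$, and the case $k=0$ is handled exactly as in \cite{KTX2}, so I would restrict to $k\neq 0$. At the next level, Proposition 2.4 gives $H_k=\bigoplus_{j\in\mathbb{A}_k}H_{k,j}$ with $H_{k,j}=V_{k,j}(L^2(\R^n))$, and since $S_t^\Lambda$ is built from convolution with the central-frequency-$\lambda(k)$ heat kernel it respects the finite-group grading indexed by $\mathbb{A}_k$ as well; hence $S_t^\Lambda(H_k)=\bigoplus_{j\in\mathbb{A}_k}S_t^\Lambda(H_{k,j})$.

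The heart of the matter is then to show $S_t^\Lambda(H_{k,j})=e^{2t\lambda(k)^2}e^{i\lambda(k)\zeta}\mathcal{H}^t_{k,j}(\C^{2n},\Upsilon)$. For this I would invoke Theorem 2.9: an entire $F(z,w)$ lies in $\mathcal{H}^t_{k,j}(\C^{2n},\Upsilon)$ iff $F(z,w)=e^{t\lambda^2}V_{k,j}f*k_t(z,w,0)$ for some $f\in L^2(\R^n)$. Writing a typical element of $H_{k,j}$ as $V_{k,j}f$ with $f\in L^2(\R^n)$, its image under $S_t^\Lambda$ is $V_{k,j}f*k_t$, restricted to $M$ and then analytically continued; restoring the central variable via the identity $e^{i\lambda(k)\zeta}$ in front, and tracking the scalar factors $e^{t\lambda^2}$ from Theorem 2.9 and the extra $e^{t\lambda^2}$ arising from $S_tf=f*k_t$ normalization (whence $e^{2t\lambda(k)^2}$), one gets exactly the claimed identification of $S_t^\Lambda(H_{k,j})$ with $e^{2t\lambda(k)^2}e^{i\lambda(k)\zeta}\mathcal{H}^t_{k,j}(\C^{2n},\Upsilon)$. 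Summing over $j\in\mathbb{A}_k$ and then over $k\in\Z$ — using Proposition 2.7 to recognize the sum over $j$ as the full space $\mathcal{H}^t_k(\C^{2n},\Upsilon)$ if one prefers the coarser statement — yields the displayed formula.

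The main obstacle, and the one step I would write out with care, is the bookkeeping of the scalar constants and the precise sense in which the analytic continuation of $S_t^\Lambda(V_{k,j}f)$ agrees with $e^{t\lambda^2}V_{k,j}f*k_t(z,w,0)$: one must verify that convolution on $\H^n$ followed by restriction to $\Lambda\backslash\H^n$ commutes (up to the explicit constant) with the passage to $\mathcal{H}^t_{k,j}$, i.e. that the quasi-periodicity conditions defining $\mathcal{H}_k(\R^{2n},\Upsilon)$ and the extra $\mathbb{A}_k$-equivariance condition are preserved under the heat flow. This is where the weight $W_t^{\lambda(k)}(z,w)$ enters and where one checks that the $L^2$-isometry property of $V_{k,j}$ upgrades to an isometry (after the constant $e^{2t\lambda(k)^2}$) onto the weighted Bergman space $\mathcal{H}^t_{k,j}(\C^{2n},\Upsilon)$. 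All of this parallels the standard-lattice computations in \cite{KTX2}; the only genuinely new feature is the replacement of the scalar lattice spacings by the vector $\mathbf{l}$ and of the cyclic group $\Z/2k\Z$ by $\mathbb{A}_k$, and these changes propagate through the formulas without altering the structure of the argument. Hence the remaining details, being routine modifications of the cited proofs, are left to the reader.
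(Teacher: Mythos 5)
Your proposal follows essentially the same route as the paper, which states Theorem 2.10 without a separate proof, presenting it as the immediate combination of the decomposition $L^{2}(\Lambda\backslash\H^n)=\sum_{k}\oplus H_k=\sum_{k}\sum_{j\in\mathbb{A}_k}\oplus H_{k,j}$, the invariance of each $H_k$ under $S_t^\Lambda$, and Theorem 2.9 --- exactly the assembly you describe. The only quibble is your accounting of the scalar $e^{2t\lambda(k)^2}$: unwinding Theorem 2.9 literally gives $S_t^\Lambda(H_{k,j})=e^{-t\lambda(k)^2}e^{i\lambda(k)\zeta}\mathcal{H}^{t}_{k,j}(\C^{2n},\Upsilon)$, with the displayed constant serving as a norm normalization, but since multiplying a subspace by a nonzero scalar does not change it, this is immaterial to the stated identity.
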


\begin{rem} Now when $\Lambda=A(d.\Gamma(1))$ then $\beta(\Lambda)=d^2$
and so $\lambda=\frac{4\pi k}{d^2}.$ Then for any $\Lambda$ invariant function
$F,$  $d^{-1}A^{-1}\circ F$ is a $\Gamma(1)$ invariant function.
So then using $V_{k,j},$ the corresponding $V_{k,j,\Lambda}$ can
be defined as $V_{k,j,\Lambda}=d^{-1}A^{-1}\circ V_{k,j}$ and
hence the same type of results can be deduced.
\end{rem}

\section{ Heat kernel transform on H-type nilmanifolds}
\setcounter{equation}{0}

In this section we study the heat kernel transform on  H-type groups $ N $
and their nilmanifolds. Using a partial Radon transform we reduce the problem
on $ N $ to a problem on $ \H^n .$ The problem on H-type nilmanifolds is also
reduced to the case of Heisenberg nilmanifolds.

\subsection{ H-type groups} H-type Lie algebras and Lie groups were
introduced by Kaplan \cite{K}. We say that a Lie algebra $ \n $ is H-type if
it is the direct sum $ \v \oplus \z $ of two Euclidean spaces with a Lie
algebra structure such that $ \z $ is the center of $ \n $ and for all unit
vector $ v \in \v $ the map $ ad(v) $ is a surjective isometry of the
orthogonal complement of $ kerad(v) $ onto $ \z.$ For such an algebra
we define a map $ J:\z \rightarrow End(\v) $ by
$$ (J_\omega v,v')  = (\omega,[v,v']), ~~~ \omega \in \z, v,v' \in\v.$$ It
then follows that $ J_\omega^2 = -I $ whenever $ \omega $ is a unit vector and
hence $ J_\omega $ defines a complex structure on $ \v.$ The Hermitian inner
product corresponding to this complex structure is given by
$$ \left\langle(v,w \right\rangle)_\omega = (v,w)+i(J_\omega v,w)=(v,w)+i([v,w],\omega).$$
Let $ 2n $
and $ m $ be  the dimensions of $ \v $ and $ \z $ respectively.

A step two nilpotent Lie group  $ N $ is said to be a H-type group if its
Lie algebra $ \n $ is of H-type. Identifying the group with its Lie algebra
we write the elements of $ N $ as $ (v,z) $ and in view of the Baker-Campbell-
Hausdorff formula the group law takes the form
$$ (v,z)(v',z') = (v,z)+(v',z')+\frac{1}{2}[(v,z),(v',z')].$$
The Heisenberg group $ \H^n $ is a H-type group. To every H-type algebra we
can associate Heisenberg algebras as follows. Given a unit vector $ \omega $
in $ \z $ let $ k(\omega) $ stand for its orthogonal complement in $ \z.$
Then the quotient algebra $ \n(\omega) = \n/k(\omega) $ can be identified
with $ \v \oplus \R $ by setting
$$ [(v,t),(v',t')]_\omega = (0,(J_\omega v,v')).$$ It has been shown in
Ricci \cite{R} that this algebra is isomorphic to the Heisenberg algebra $
\h^n.$ We denote this group by $ \H^n_\omega.$

The above connection with Heisenberg algebras makes the representation theory
of H-type groups simple. The irreducible unitary representations of $ N $
comes in two groups. The one dimensional representations do not occur in the
Plancherel formula and hence we do not consider them. If $ \pi $ is any
infinite dimensional irreducible unitary representation then its restriction
to the center is a character and hence $ \pi(0,z) = e^{i\lambda(\omega,z)}Id
$ for some $ \lambda > 0 $ and $ \omega \in S^{m-1} $ where $ m $ is the
dimension of $ \z.$ The representation $ \pi $ factors through a
representation of $ \H^n_\omega.$ By making use of Stone-von Neumann theorem
we can show that all irreducible unitary representations are parametrised by
$(\lambda,\omega) .$ We denote such a representation by
$ \pi_{\lambda,\omega} .$

The Plancherel theorem for $ N $ can be deduced from that of $
\H^n $ by making use of (partial) Radon transform. As we need to
use this we briefly recall the definition and some properties.
Given an integrable function $ f $ on $ N $ and $ \omega \in
S^{m-1} $ we define a function on $ \H^n_\omega $ by
$$ f_\omega(v,s) = \int_{k(\omega)}f(v,s\omega+\eta) d\eta.$$ The collection
$ f_\omega $ completely determines $ f $. Moreover, it can be verified that
$$ (f*g)_\omega(v,s) = f_\omega *g_\omega(v,s) $$ where the first convolution
is in $ N $  and the second in $ \H^n_\omega.$ We also remark that
$ \pi_{\lambda,\omega}(f) = \pi_\lambda(f_\omega) $ where $
\pi_\lambda $ is the Schr\"{o}dinger representation of $
\H^n_\omega.$

\subsection{Heat kernel transform on H-type groups}

We fix an orthonormal basis $ X_j, j = 1,2,..,2n $ for the Lie algebra $ \v $
and define the sublaplacian
 $ \CL = -\sum_{j=1}^{2n} X_j^2 $  as in the case of the Heisenberg groups. Then
it is known that $\CL $ generates a diffusion semigroup which is given by a
kernel $ p_t .$ This kernel has been explicitly calculated by Cygan \cite{C}
and Randall \cite{Ra}. Indeed, we have
$$ p_t(v,z) = c_n \int_{\R^m} e^{-i u\cdot z} \left(\frac{|u|}{\sinh t|u|}
\right)^n e^{-\frac{1}{4}|u|(\coth t|u|)|v|^2} du.$$
This kernel is a positive Schwartz class function and good estimates can be
proved for the same.

Let $ Z_j = \frac{\partial}{\partial z_j }, j = 1,2,...,m $ and consider the
full Laplacian $ \Delta = \CL - \sum_{j=1}^m Z_j^2 .$ The heat kernel
associated this operator is given by
$$  q_t(v,z) = c_n \int_{\R^m} e^{-t|u|^2}
e^{-i u\cdot z} \left(\frac{|u|}{\sinh t|u|}
\right)^n e^{-\frac{1}{4}|u|(\coth t|u|)|v|^2} du.$$
It then follows that $ q_t(v,z) $ can be holomorphically extended to $ \C^{2n}
\times \C^m $ as an entire function. For $ f \in L^2(N) $ the function
$ f*q_t(v,z) $ which solves the  heat equation for $ \Delta $ also extends to
$ \C^{2n} \times \C^m $ as an entire function. We are interested in this
transform taking $ f $ into the holomorphically extendable function
$ f*q_t .$ We can ask for a characterisation of the image of $ L^2(N) $ under
this transform.

In \cite{KTX1} this problem was treated for the Heisenberg group and it was
shown that the image is not a weighted Bergman transform in sharp contrast
to the Euclidean case. There the authors have obtained different
characterisations. In this section we state and prove one such
characterisation for the heat kernel transform on $ N.$ For the motivation
of the following we refer to \cite{T2}.

Consider the representations $ \pi_{\lambda,\omega} $ realised on a Hilbert
space $ \CH.$ Given $ f \in L^2(N) $ the operator $ \pi_{\lambda,\omega}(f) $
is Hilbert-Schmidt. The representations $ \pi_{\lambda,\omega} $ can also be
realised on the space of Hilbert-Schmidt operators on $ \CH $ simply defining
$ \pi_{\lambda,\omega}(x,u,\xi)T $ as the action on a Hilbert-Schmidt
operator $ T.$ Note that we have slighted changed our notation and written
$ (x,u,\xi), x,u \in \R^n, \xi \in \R^m $ for the elements of $ N.$ We can
therefore, consider the operator valued function $ (x,u,\xi) \rightarrow
\pi_{\lambda,\omega}(x,u,\xi)\pi_{\lambda,\omega}(F) $ and ask if it can be
holomorphically extended to $ \C^{n} \times \C^n \times \C^m.$ We can show
that it is so precisely when $ F = f*q_t $ for some $ f \in L^2(N).$ Thus we
get the following characterisation of the image of $ L^2(N) $ under the heat
kernel transform.

\begin{thm} A function $ F $ belongs to the image of $ L^2(N)$ under the
heat kernel transform if and only if $ (x,u,\xi) \rightarrow
\pi_{\lambda,\omega}(x,u,\xi)\pi_{\lambda,\omega}(F) $ extends to
$ \C^{n} \times \C^n \times \C^m $ as an entire function so that
$$ \|\pi_{\lambda,\omega}(i(y,v,\eta)) \pi_{\lambda,\omega}(F)^*\|_{HS} $$
is square integrable over $ N \times \R^m $ with respect to the measure
$$ e^{-\frac{1}{2t}|\eta|^2} p_{2t}^\lambda(2y,2v)|\lambda|^{n+m-1} d(y,v,\eta)d\lambda d\omega .$$
If $ F = f*q_t $ the above integral is a constant multiple of
$ \|f\|_2^2.$
\end{thm}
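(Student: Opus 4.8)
The plan is to reduce the statement on the H-type group $N$ to the corresponding result on the Heisenberg group $\H^n$ via the partial Radon transform, then invoke the characterisation proved in \cite{KTX1}. First I would recall the Plancherel decomposition: for $f \in L^2(N)$ we have $\pi_{\lambda,\omega}(f) = \pi_\lambda(f_\omega)$, where $f_\omega$ is the Radon transform of $f$ along the direction $\omega \in S^{m-1}$, and $\pi_\lambda$ is the Schr\"odinger representation of $\H^n_\omega$. The key algebraic fact to exploit is that Radon transform intertwines the heat semigroups: if $F = f*q_t$ on $N$, then $F_\omega = f_\omega * k_t^\omega$ on $\H^n_\omega$, where $k_t^\omega$ is the Heisenberg heat kernel (this follows by integrating the explicit formula for $q_t$ over $k(\omega)$ and comparing with the formula for $k_t$, or more conceptually from $(f*g)_\omega = f_\omega * g_\omega$ together with $\Delta$ restricting appropriately). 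Hence $\pi_{\lambda,\omega}(F) = \pi_\lambda(f_\omega * k_t^\omega)$, which is exactly the object whose holomorphic extendability is governed by the Heisenberg result.

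Next I would recall precisely the characterisation from \cite{KTX1}: a Hilbert-Schmidt operator-valued function on $\H^n$ of the form $(x,u,\xi) \mapsto \pi_\lambda(x,u,\xi) \pi_\lambda(g * k_t)$ extends entirely to $\C^n \times \C^n \times \C$, and the norm $\|\pi_\lambda(i(y,v,\eta))\pi_\lambda(g*k_t)^*\|_{HS}$ is square integrable against the appropriate Gaussian-type weight $e^{-|\eta|^2/2t} p_{2t}^\lambda(2y,2v)|\lambda|^{n}$ on $\H^n \times \R$, with the integral computing $\|g\|_2^2$ up to a constant. Applying this fibrewise in $\omega$ to $g = f_\omega$, and then integrating over $\omega \in S^{m-1}$ and $\lambda > 0$ with the Plancherel density, I would assemble the norm identity. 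The extra factor $|\lambda|^{m-1}$ and the $d\omega$ integration in the measure on $N \times \R^m$ are exactly what is needed to convert the one-parameter Heisenberg Plancherel weight $|\lambda|^n d\lambda$ into the $N$-Plancherel weight $|\lambda|^{n+m-1} d\lambda d\omega$; and $\int_{S^{m-1}} \|f_\omega\|_2^2 |\lambda|^{m-1} d\lambda d\omega$ recovers $c\|f\|_2^2$ by the Plancherel theorem for $N$ quoted in \secref{...}.

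For the converse direction, suppose $F$ is given with $(x,u,\xi)\mapsto \pi_{\lambda,\omega}(x,u,\xi)\pi_{\lambda,\omega}(F)$ entire and the integrability condition holding. For each fixed $\omega$ the function $\pi_\lambda(x,u,\xi)\pi_{\lambda,\omega}(F)$ satisfies the Heisenberg hypothesis, so by \cite{KTX1} there is $g_\omega \in L^2(\H^n)$ with $\pi_{\lambda,\omega}(F) = \pi_\lambda(g_\omega * k_t^\omega)$; measurability of $g_\omega$ in $\omega$ follows from uniqueness and a standard selection argument. One then shows the family $\{g_\omega\}$ is consistent, i.e.\ arises as $\{f_\omega\}$ for a single $f \in L^2(N)$ — this uses that the Radon transform is a bijection onto its image and that the range condition (the "moment conditions" characterising which families $\{h_\omega\}$ are Radon transforms) is automatically satisfied because $\pi_{\lambda,\omega}(F)$ depends on $(\lambda,\omega)$ only through the single operator $\pi_{\lambda,\omega}(F)$, not independently on each fibre. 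Then $F = f*q_t$ by inverting the intertwining relation and the density of $\pi_{\lambda,\omega}(f*q_t)$ data.

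The main obstacle I anticipate is not the Heisenberg input (which is cited wholesale) but the bookkeeping of the converse: verifying that the fibrewise preimages $g_\omega$ glue to a genuine function on $N$, i.e.\ lie in the range of the Radon transform. This requires either an explicit description of that range (moment/consistency conditions à la Ricci \cite{R}) or an argument that the holomorphic extendability hypothesis, imposed uniformly in $\omega$, forces consistency. A secondary technical point is justifying the interchange of the $d\omega\,d\lambda$ integration with the Heisenberg fibrewise norm identities, which needs the positivity and Schwartz-class estimates for $p_t$ mentioned in the text to control everything; Fubini-Tonelli handles this once those estimates are in place. I would handle the forward direction in full and, following the paper's stated convention of modelling proofs on \cite{KTX2,KTX1} and omitting routine parts, indicate the converse while pointing to the Radon-transform range results in \cite{R}.
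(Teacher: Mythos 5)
Your overall strategy---reduce to the Heisenberg group via the partial Radon transform and quote the known characterisation there---is exactly the paper's, but there is a concrete gap in the step where you ``apply this fibrewise in $\omega$ to $g=f_\omega$.'' The Heisenberg characterisation applies to $g\in L^2(\H^n)$, and $f_\omega$ is \emph{not} in $L^2(\H^n_\omega)$ when $m>1$ (the paper points this out explicitly). The fix the paper uses is the \emph{modified} Radon transform $R_\omega f(x,u,s)=D_s^{(m-1)/2}f_\omega(x,u,s)$, which does land in $L^2$ and satisfies $\pi_\lambda(R_\omega f)=|\lambda|^{(m-1)/2}\pi_\lambda(f_\omega)$, so that
$$\pi_{\lambda,\omega}(x,u,\xi)\pi_{\lambda,\omega}(f*q_t)=|\lambda|^{\frac{1-m}{2}}e^{i\lambda\omega\cdot\xi}\pi_\lambda(x,u,0)\pi_\lambda(R_\omega f*k_t),$$
and the Heisenberg theorem (the paper invokes Theorem 13.5 of \cite{T2}, not \cite{KTX1}, for this operator-valued form) is applied to $R_\omega f$, not to $f_\omega$. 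The factor $|\lambda|^{m-1}$ in the stated measure is precisely $\bigl(|\lambda|^{(m-1)/2}\bigr)^2$, i.e.\ it is absorbed into the operator to convert $\pi_\lambda(f_\omega)$ into $\pi_\lambda(R_\omega f)$; it is not an independent ``Plancherel density'' to be integrated against $\|f_\omega\|_2^2$. Your bookkeeping identity $\int_{S^{m-1}}\|f_\omega\|_2^2\,|\lambda|^{m-1}\,d\lambda\,d\omega=c\|f\|_2^2$ cannot be right as written: the integrand does not depend on $\lambda$ (so the $d\lambda$ integral diverges) and $\|f_\omega\|_2$ need not be finite. The correct identity, which is what the paper uses, is the Plancherel formula for the modified Radon transform, $\int_{S^{m-1}}\int_{\R^{2n+1}}|R_\omega f(x,u,s)|^2\,d(x,u,s)\,d\omega=\|f\|_2^2$, with the $\lambda$-integration already consumed at the Heisenberg level.

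On the converse, your worry about gluing the fibrewise preimages is reasonable but somewhat misplaced: since $F$ is a single function on $N$, the data $\pi_{\lambda,\omega}(F)=\pi_\lambda(F_\omega)$ are automatically consistent in $\omega$, and the paper simply recovers $f$ from the family $\{R_\omega F\}$ (after dividing out the heat semigroup on each fibre) by the inversion formula for the modified Radon transform, rather than by verifying range/moment conditions \`a la Ricci. So the converse is lighter than you anticipate, whereas the forward direction needs the $L^2$-correction $D_s^{(m-1)/2}$ that your write-up omits.
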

\begin{proof} Using partial Radon transform we can quickly reduce the theorem
to the case of the Heisenberg group. Indeed, as we have already remarked,
$ \pi_{\lambda,\omega}(F) = \pi_\lambda(F_\omega).$ We also know that
$ (q_t)_\omega = k_t,$ the heat kernel on $ \H^n_\omega.$ Therefore, if
$ F = f*q_t $ then $ \pi_{\lambda,\omega}(F) = \pi_\lambda(f_\omega *k_t).$
The function $ f_\omega $ does not belong to $ L^2(\H^n_\omega) $ but the
modified Radon transform
$$ R_\omega f(x,u,s) = D_s^{\frac{m-1}{2}}f_\omega(x,u,s) $$
where $ D_s^{\frac{m-1}{2}} $ is the fractional derivative of order
$ (m-1)/2 $ does. Since $ \pi_\lambda( R_\omega f) =
|\lambda|^{\frac{m-1}{2}}\pi_\lambda(f_\omega) $ we can write
$$ \pi_{\lambda,\omega}(x,u,\xi)\pi_{\lambda,\omega}(f*q_t) =
|\lambda|^{\frac{1-m}{2}} e^{i \lambda \omega\cdot \xi}
\pi_\lambda(x,u,0) \pi_\lambda(R_\omega f *k_t).$$ We can
therefore appeal to Theorem 13.5 in \cite{T2} to conclude that  $
(x,u,\xi) \rightarrow
\pi_{\lambda,\omega}(x,u,\xi)\pi_{\lambda,\omega}(F) $ extends to
$ \C^{n} \times \C^n \times \C^m $ as an entire function  with the
stated integrability condition.

By the same result we can also conclude that  the integral mentioned in
the statement of the theorem reduces to
$$ \int_{S^{m-1}} \left( \int_{\R^{2n+1}}|R_\omega f(x,u,s)|^2 d(x,u,s)\right)
d\omega $$
which is nothing but $ \|f\|_2^2.$ This proves half of the theorem. The other
half can be proved, again using Theorem 13.5 of \cite{T2} and the inversion
formula for the modified Radon transform.
\end{proof}

\subsection{Nilmanifolds associated to H-type groups}
\setcounter{equation}{0}

We consider a nilmanifold $ M = \Gamma \backslash N $ where $ N $ is a H-type
group and $ \Gamma $ is a lattice subgroup so that $ M $ is compact. Such
subgroups $ \Gamma $ are characterised by the property that log$\Gamma$ is a
subgroup of the underlying additive group of the Lie algebra $ \n.$ Here log
stands for the inverse of the expenential map exp:$\n \rightarrow N.$ Given
the nilmanifold $ \Gamma \backslash N $ we form $ L^2(\Gamma \backslash N) $
using an $ N-$invariant measure. We can then define the right regular
representation $ U_\Gamma $ of $ N $ on $ L^2(\Gamma \backslash N) .$ We are
interested in decomposing $ L^2(\Gamma \backslash N) $ into subspaces that are
irreducible under the action of $ U_\Gamma .$

This problem has been addressed in the more general context of nilmanifolds
in Brezin \cite{B} and Auslander-Brezin \cite{AB}. For step two nilpotent Lie
groups Brezin \cite{B} has reduced the problem to the case of Heisenberg
groups. For Heisenberg nilmanifolds he has used variants of the Weil
construction (which we have already seen in Section 2) to get the
decompostion. In the general case he has produced an algorithm which enables
one to reduce the general case to the step two case. The decomposition
obtained by Brezin in the step two case is not good enough for our the purpose
of studying heat kernel transforms. As we are dealing with a special class of
step two groups we can obtain a very explicit decomposition of
$ L^2(\Gamma \backslash N).$

By making use of the following lemma, found in M\"{u}ller
\cite{M}, we reduce the H-type case to the Heisenberg case
directly. Recall that for every $ \omega \in \S^{m-1} $ the map $
J_\omega $ defines  a complex structure on $ \v = \R^{2n}.$
Therefore, we can find an orthogonal transformation $
\sigma_\omega $ such that $ J_\omega = \sigma_\omega
J\sigma_\omega^t $ where $ J $ is the $ 2n \times 2n $ matrix
defining the standard symplectic form on $ \R^{2n}.$ We then have

\begin{lem} The mapping $ \alpha_\omega: N \rightarrow \H^n $ defined by
$ \alpha_\omega(v,z) = (\sigma_\omega^tv,z\cdot \omega), (v,z) \in N $ is
an epimorphism of Lie groups. If $ k(\omega) $ is the kernel of
$ \alpha_\omega $ and $ A = \exp k(\omega) $ then $ N/A $ is isomorphic to
$ \H^n.$
\end{lem}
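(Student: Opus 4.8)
The plan is to verify the three assertions of the lemma directly from the definitions, checking that $\alpha_\omega$ is a Lie group homomorphism, that it is surjective, and finally that the quotient by the kernel of the corresponding Lie algebra map is isomorphic to $\h^n$. First I would pass to Lie algebras: since $\exp$ is a diffeomorphism for simply connected nilpotent groups and we have identified $N$ with $\n = \v \oplus \z$, the map $\alpha_\omega$ is just the linear map $(v,z) \mapsto (\sigma_\omega^t v, z\cdot\omega)$, and I would show it is a Lie algebra homomorphism from $\n$ onto $\h^n = \v \oplus \R$ with the bracket $[(v,t),(v',t')]_\omega = (0,(J_\omega v, v'))$ transported via $\sigma_\omega$. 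Concretely, one computes
$$ \alpha_\omega([(v,z),(v',z')]) = \alpha_\omega(0,[v,v']) = (0, ([v,v'],\omega)) = (0,(J_\omega v, v')), $$
using the defining relation $(J_\omega v, v') = (\omega, [v,v'])$ for the $J$-map; on the other hand the bracket in $\h^n$ of $\alpha_\omega(v,z) = (\sigma_\omega^t v, z\cdot\omega)$ and $\alpha_\omega(v',z')$ is $(0, (J\sigma_\omega^t v, \sigma_\omega^t v'))$, and the identity $J_\omega = \sigma_\omega J \sigma_\omega^t$ (equivalently $\sigma_\omega^t J_\omega = J\sigma_\omega^t$) together with orthogonality of $\sigma_\omega$ gives $(J\sigma_\omega^t v, \sigma_\omega^t v') = (\sigma_\omega^t J_\omega v, \sigma_\omega^t v') = (J_\omega v, v')$. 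So the two brackets agree, and since by Baker–Campbell–Hausdorff the group law on both $N$ and $\H^n$ is determined by the bracket via $(a)(b) = a + b + \tfrac12[a,b]$, linearity of $\alpha_\omega$ plus the bracket identity show $\alpha_\omega$ is a group homomorphism.

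Surjectivity is immediate: $\sigma_\omega^t$ is an orthogonal (hence invertible) transformation of $\v = \R^{2n}$, and $z \mapsto z\cdot\omega$ maps $\z = \R^m$ onto $\R$ since $\omega \neq 0$; thus every $(w,s) \in \v \oplus \R = \H^n$ is hit. For the kernel, $\alpha_\omega(v,z) = 0$ forces $\sigma_\omega^t v = 0$, i.e. $v = 0$, and $z\cdot\omega = 0$, i.e. $z \in \omega^\perp = k(\omega)$, so $\ker\alpha_\omega = \{0\}\oplus k(\omega)$ as a subalgebra of $\n$; since this is a subspace of the center, it is an ideal, and $A = \exp k(\omega)$ is the corresponding closed normal subgroup. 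The first isomorphism theorem for Lie groups then yields $N/A \cong \H^n$, the continuous bijective homomorphism induced by $\alpha_\omega$ being a diffeomorphism because in the nilpotent setting it is a polynomial map with polynomial inverse (alternatively, dimension count plus injectivity).

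The only place requiring genuine care, rather than bookkeeping, is the compatibility of the brackets through $\sigma_\omega$ — that is, correctly tracking how the complex structure $J_\omega$ relates to the standard $J$ under the orthogonal conjugation and confirming that the resulting quotient bracket really is (a copy of) the Heisenberg bracket rather than some rescaled or twisted version. I would double-check the normalization by noting that $J_\omega^2 = -I$ (stated in the excerpt) forces $J^2 = -I$ as well via the conjugation, so the transported bracket is nondegenerate with the correct signature and $\n(\omega) = \n/k(\omega)$ is, as Ricci's result already tells us, isomorphic to $\h^n$; the content of the lemma is just that this isomorphism is implemented globally by the explicit map $\alpha_\omega$. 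Everything else is a routine verification from the group law and the definition of the $J$-map.
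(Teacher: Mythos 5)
Your proof is correct, but there is nothing in the paper to compare it against: the authors state this lemma without proof, quoting it from M\"uller's lecture notes \cite{M}. Your direct verification --- passing to the Lie algebra via the Baker--Campbell--Hausdorff identification, checking that the linear map intertwines the brackets using the defining relation $(J_\omega v,v')=(\omega,[v,v'])$ together with $\sigma_\omega^t J_\omega=J\sigma_\omega^t$ and the orthogonality of $\sigma_\omega$, identifying the kernel as the central ideal $\{0\}\oplus \omega^{\perp}$, and invoking the first isomorphism theorem --- is the standard argument and correctly supplies the details the paper omits. The one point you rightly flag as needing care, the compatibility of the brackets under conjugation by $\sigma_\omega$, is handled correctly; the only cosmetic remark is that the lemma's phrase ``$k(\omega)$ is the kernel of $\alpha_\omega$'' refers to the Lie algebra kernel (consistent with the earlier definition of $k(\omega)$ as the orthogonal complement of $\omega$ in the center), which is exactly how you read it.
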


Given a lattice subgroup $ \Gamma $ of $ N $ and $ \omega \in S^{m-1} $
let us set $ \Gamma_\omega = \alpha_\omega(\pi(\Gamma))$ where
$ \pi: N \rightarrow N/A $ is the canonical projection. In order to show that
$ \Gamma_\omega $ is a lattice subgroup of $ \H^n $ we
make use of the following theorem from \cite{BLU}.

\begin{thm} Suppose that $\Gamma$ is a lattice (resp. uniform lattice)
of a locally compact group $G,$ $H$ a closed normal subgroup of
the group $G $ and  $\pi:G\rightarrow{G/H}$ the canonical homomorphism.
The subgroup $\pi(\Gamma)$ is a lattice (resp. uniform lattice) in
the group $G/H$ if and only if $\Gamma\cap H$ is a lattice(resp. uniform
lattice) in the group $H.$
\end{thm}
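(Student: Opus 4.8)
The plan is to isolate two ingredients: a \emph{closedness} statement for the saturated set $\Gamma H$, and the \emph{multiplicativity of covolume} along the tower $H\le G\to G/H$. Once both are in place the asserted equivalence falls out of comparing three strictly positive quantities. First I would dispose of the easy implication: if $\pi(\Gamma)$ is a lattice in $G/H$ then in particular it is discrete, hence closed, so $\Gamma H=\pi^{-1}(\pi(\Gamma))$ is closed in $G$; and $\Gamma\cap H$, being a subgroup of the discrete group $\Gamma$, is automatically discrete in $H$.

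The real content of the reverse implication is the claim that \emph{if $\Gamma\cap H$ is a lattice in $H$, then $\Gamma H$ is closed in $G$ and $\pi(\Gamma)$ is discrete}. In the uniform (cocompact) situation this is short: choose a compact $K\subset H$ with $H=(\Gamma\cap H)K$; then $\Gamma H=\Gamma K$, and the product of the closed discrete set $\Gamma$ with the compact set $K$ is closed in $G$. Since $\Gamma H$ is then a closed $H$-saturated set, $\pi(\Gamma)=\pi(\Gamma H)$ is closed in $G/H$, and a lifting-of-sequences argument — lift a hypothetical $\pi(\gamma_n)\to e$ to $\gamma_n h_n\to e$ in $G$, write $h_n=\delta_n k_n$ with $\delta_n\in\Gamma\cap H$, $k_n\in K$, and invoke discreteness of $\Gamma$ — shows $\pi(\Gamma)$ is in fact discrete. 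The general finite-covolume case of the closedness claim is where the work concentrates: here I would argue by contradiction, pass to the closed subgroup $F=\overline{\Gamma H}$ with $H$ normal in $F$ and $\pi(\Gamma)$ dense in $F/H$, and derive a contradiction with the finiteness of the $H$-invariant measure on $H/(\Gamma\cap H)$ via a recurrence argument on $F/\Gamma$ (this is precisely the delicate step carried out in Raghunathan's book on discrete subgroups, which one may alternatively simply cite).

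Granting that $\Gamma H$ is closed and $\pi(\Gamma)$ discrete, the remainder is quotient-measure bookkeeping. The inclusion $H\hookrightarrow G$ descends to a homeomorphism $H/(\Gamma\cap H)\cong \Gamma H/\Gamma$, so $G/\Gamma$ is the total space of a fibration with this fiber over the base $(G/H)/\pi(\Gamma)\cong G/\Gamma H$. Applying Weil's integration formula to the two stages $H\le G$ and then $G/H$, with compatible Haar normalizations, yields the covolume identity
$$ \mathrm{vol}(G/\Gamma)=\mathrm{vol}\bigl((G/H)/\pi(\Gamma)\bigr)\cdot \mathrm{vol}\bigl(H/(\Gamma\cap H)\bigr), $$
all three factors being strictly positive. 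Since $\Gamma$ is a lattice in $G$ the left-hand side is finite, so finiteness of $\mathrm{vol}(H/(\Gamma\cap H))$ is equivalent to finiteness of $\mathrm{vol}((G/H)/\pi(\Gamma))$; combined with the discreteness established above, this gives both directions of the equivalence. The uniform versions run identically with ``finite invariant volume'' replaced by ``compact quotient'': compactness of $G/\Gamma$ forces the fiber $H/(\Gamma\cap H)$ and base $(G/H)/\pi(\Gamma)$ each to be compact, and conversely compactness of fiber and base forces compactness of the total space. The one genuine obstacle is the closedness of $\Gamma H$ in the general finite-covolume case; everything else is a one-line topological remark or routine Fubini-type bookkeeping. (In the use made of this result in the present paper, $G=N$ is nilpotent and $H=A=\exp k(\omega)$ is a connected central subgroup, so $\Gamma H$ is automatically closed and the obstacle disappears.)
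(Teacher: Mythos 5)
The paper offers no proof of this statement: it is imported wholesale from the reference \cite{BLU} (it is essentially Theorem~1.13 of Raghunathan's book on discrete subgroups together with its converse), so there is no in-paper argument to measure yours against --- you are supplying a proof where the authors supply a citation. Judged on its own terms, your outline is the standard and correct one: the forward implication via discreteness/closedness of $\pi(\Gamma)$ and hence of $\Gamma H=\pi^{-1}(\pi(\Gamma))$; the reverse implication via closedness of $\Gamma H$ and discreteness of $\pi(\Gamma)$; and then the identification $H/(\Gamma\cap H)\cong \Gamma H/\Gamma$ together with the covolume identity
$$ \mathrm{vol}(G/\Gamma)=\mathrm{vol}\bigl((G/H)/\pi(\Gamma)\bigr)\cdot \mathrm{vol}\bigl(H/(\Gamma\cap H)\bigr) $$
to convert finiteness (or compactness) statements back and forth. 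Your uniform-lattice argument ($\Gamma H=\Gamma K$ with $K\subset H$ compact, hence closed; properness of $G/\Gamma\to G/\Gamma H$) is complete and elementary, and it is worth noting that this is the only case the paper actually invokes: the theorem is applied with $G=N$ nilpotent, $H=A=\exp k(\omega)$ a connected central subgroup, and $\Gamma$ cocompact, exactly the situation you single out in your closing parenthesis.

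Two caveats keep this from being a self-contained proof of the statement as written. First, in the non-uniform direction the closedness of $\Gamma H$ and discreteness of $\pi(\Gamma)$, given only that $\Gamma\cap H$ has finite covolume in $H$, is not a technicality --- it is the whole theorem in that case --- and you explicitly outsource it to Raghunathan; that is a legitimate citation but it means your argument proves the general statement only modulo that reference (no worse than what the paper does, but you should say so plainly). Second, the appeal to Weil's integration formula in the generality of locally compact groups needs the invariant measures on all three quotients to exist, i.e.\ the usual compatibility of modular functions; this does hold here (a group containing a lattice is unimodular, and a closed normal subgroup of a unimodular group satisfies $\Delta_G|_H=\Delta_H$, so $H$ and $G/H$ are unimodular once the lattice hypotheses are in force), but the check should be recorded rather than assumed, since in the direction where you only know $\Gamma\cap H$ is discrete you must justify that $H/(\Gamma\cap H)$ carries an $H$-invariant measure before its volume can appear in the identity.
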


We remark that in the terminology of the above theorem $ \Gamma $ is said to
be a uniform lattice if $ G/\Gamma $ is compact. By taking $ G = N $ and $ H
= A $ as in the lemma, we see that $ \Gamma_\omega $ is a uniform lattice
provided $ \Gamma \cap A $ is a uniform lattice in the group $ A.$ To check
that this is so we make use of another theorem from \cite{BLU}.

\begin{thm} Suppose that $\Gamma$ is a discrete subgroup and $H$ a
closed subgroup of a locally compact group $G.$ Further assume that $\Gamma$
is a uniform lattice of $G .$ Then the
subgroup $\Gamma\cap{H}$ is a uniform lattice if and only if the
subgroup $H$ is $\Gamma-$closed (i.e $H\Gamma$ is
closed).
\end{thm}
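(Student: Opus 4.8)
The plan is to move the whole question into the compact space $G/\Gamma$ and then to read off the answer from the topology of the orbit $H\Gamma/\Gamma$ inside it. First I would fix the canonical projection $p : G \to G/\Gamma$; since $\Gamma$ is discrete it is closed in $G$, so $G/\Gamma$ is locally compact Hausdorff, and by hypothesis it is compact. I would also record two harmless reductions. On the one hand $H\Gamma$ is $\Gamma$-saturated, i.e. $p^{-1}\bigl(p(H\Gamma)\bigr) = H\Gamma$, so since $p$ is a quotient map, $H\Gamma$ is closed in $G$ if and only if $p(H\Gamma) = H\Gamma/\Gamma$ is closed in $G/\Gamma$. On the other hand $H\cap\Gamma$ is automatically discrete (it lies in the discrete group $\Gamma$) and $H$ is locally compact (a closed subgroup of $G$), so ``$H\cap\Gamma$ is a uniform lattice in $H$'' is merely the statement that $H/(H\cap\Gamma)$ is compact.

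For the implication ``$H\cap\Gamma$ a uniform lattice in $H$ $\Rightarrow$ $H$ is $\Gamma$-closed'' I would argue directly: restricting $p$ to $H$ gives a continuous map $H \to G/\Gamma$ whose fibres are precisely the cosets of $H\cap\Gamma$, so it factors through a continuous surjection $H/(H\cap\Gamma) \to H\Gamma/\Gamma$. Hence $H\Gamma/\Gamma$ is the continuous image of a compact space, so it is compact, hence closed in the Hausdorff space $G/\Gamma$; by the first reduction, $H\Gamma$ is closed in $G$.

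For the converse I would assume $H\Gamma$ closed. By the first reduction $H\Gamma/\Gamma$ is then closed in the compact space $G/\Gamma$, hence compact, and in particular locally compact Hausdorff, so a Baire space. Now $H$ acts on $H\Gamma/\Gamma$ by left translations; this action is continuous, it is transitive, and the stabiliser of the base point $e\Gamma$ is exactly $H\cap\Gamma$. This produces a continuous bijection $H/(H\cap\Gamma) \to H\Gamma/\Gamma$, and everything then comes down to promoting it to a homeomorphism. For that I would invoke the open mapping theorem for continuous transitive actions of $\sigma$-compact locally compact groups on Baire spaces (and $H$ is indeed $\sigma$-compact, being a closed subgroup of the Lie group $G$ in our setting). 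Its proof is a Baire-category argument: writing $H$ as a countable union of compact sets $K_n$ exhibits $H\Gamma/\Gamma = \bigcup_n K_n\cdot(e\Gamma)$ as a countable union of closed sets, so some $K_n\cdot(e\Gamma)$ has nonempty interior, and then a covering-plus-symmetrisation argument shows that $U\cdot(e\Gamma)$ is a neighbourhood of $e\Gamma$ for every identity neighbourhood $U$ in $H$; this is precisely openness of the orbit map. With the homeomorphism in hand, $H/(H\cap\Gamma)$ is compact, and together with discreteness of $H\cap\Gamma$ this says that $H\cap\Gamma$ is a uniform lattice in $H$.

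The one genuine obstacle is this last homeomorphism, and it is exactly here that the hypothesis ``$H\Gamma$ closed'' enters --- it is what makes $H\Gamma/\Gamma$ locally compact, hence Baire. The hypothesis cannot be omitted: for $G = \R^2$, $\Gamma = \Z^2$ and $H$ an irrational line one has $H\cap\Gamma = \{0\}$, so $H/(H\cap\Gamma) \cong \R$ is non-compact even though the natural continuous bijection onto the dense, non-locally-compact image $H\Gamma/\Gamma$ in the torus exists. Everything else is routine point-set topology about quotients by closed discrete subgroups.
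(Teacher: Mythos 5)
Your argument is correct, and there is nothing in the paper to compare it against: the authors import this statement verbatim from the book of Bonfiglioli--Lanconelli--Uguzzoni and give no proof, so any complete argument is added value. What you write is the standard proof. The easy direction (continuous image of the compact space $H/(H\cap\Gamma)$ is compact, hence closed in the Hausdorff space $G/\Gamma$, hence $H\Gamma$ is closed by saturation) is airtight. For the converse, you correctly identify the only real issue --- upgrading the continuous bijection $H/(H\cap\Gamma)\to H\Gamma/\Gamma$ to a homeomorphism --- and the Baire-category open-mapping argument you sketch does need $H$ to be $\sigma$-compact, which is \emph{not} automatic for an arbitrary locally compact $G$ as in the literal statement of the theorem; you are right, however, that it is harmless here, since in the paper $G=N$ is a connected nilpotent Lie group (so every closed subgroup is $\sigma$-compact), and one could even note that $G$ itself is $\sigma$-compact whenever $\Gamma$ is countable and cocompact. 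The irrational-line example showing that the closedness hypothesis cannot be dropped is apt. Two cosmetic points: it is worth saying explicitly that a discrete subgroup of a Hausdorff group is closed (so that $G/\Gamma$ is Hausdorff and $H/(H\cap\Gamma)$ makes sense as a Hausdorff quotient), and that the fibres of $p|_H$ are the left cosets $h(H\cap\Gamma)$ because $h^{-1}h'\in\Gamma$ forces $h^{-1}h'\in H\cap\Gamma$ when $h,h'\in H$; both are one-line checks and you implicitly use them correctly.
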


Therefore, we only need to show that $ A\Gamma $ is closed. This can be done
by following the arguments presented in Section 3 of \cite{B}. Indeed, we
need to start with the representation $ \pi_{\lambda,\omega} $ in place
of $ I_N(\phi) $ in \cite{B}, identify the kernel of the linear functional
$\phi $ and proceed with the computations. We will end up with the problem of
showing $ A\Gamma $ is closed. This is precisely the content of the Lemma
in Section 3 of \cite{B}. We refer to this article for the details.

\subsection{Heat kernel transform on H-type nilmanifolds}

We are ready to look at the heat kernel transform on a H-type nilmanifold
$ M = \Gamma \backslash N.$ We make use of the map $ \alpha_\omega $ in
order to reduce the problem to the Heisenberg nilmanifold $ \Gamma_\omega
\backslash \H^n_\omega.$

First we get a decomposition of $ L^2(\Gamma \backslash N) $ into subspaces
irreducible under the action of $ U_\Gamma.$ From  general theory (see Moore
 \cite{Mo}) it is known that $ L^2(\Gamma \backslash N) $ decomposes into a
discrete direct sum of irreducible invariant subspaces for $
U_\Gamma.$ On each of these subspaces $ U_\Gamma $ will be
unitarily equivalent to a finite multiple of $
\pi_{\lambda,\omega} $ for some $ \lambda >0 $ and $ \omega \in
S^{m-1}.$ Let $ \pi_{\lambda,\omega} $ be such a representation
which occurs in $ U_\Gamma $ and let $ K(\lambda,\omega) $ be the
invariant subspace. Recalling that $ \pi_{\lambda,\omega}(v,z) =
e^{i\lambda z\cdot \omega}\pi_{\lambda,\omega}(v,0) $ we infer
that each function $ f \in K(\lambda,\omega) $ is invariant under
the right action of $ A $ which is just $ \exp k(\omega).$ Hence
we can think of $ K(\lambda,\omega) $ as a subspace of $ L^2(N/A)
$ or even as a subspace of $ L^2\left(\Gamma_\omega \backslash
\H^n\right) $ via the map $ \alpha_\omega.$

As $ K(\lambda,\omega) $ is invariant under $ U_\Gamma $ the subspace
$$ \tilde{K}(\lambda,\omega) = \{ f: f \circ \alpha_\omega \circ \pi \in
K(\lambda,\omega) \} $$
is invariant under $ \Gamma_\omega.$ Let $ \tilde{H}_{\lambda,\omega}^j $
be the orthogonal subspaces of $ \tilde{K}(\lambda,\omega) $ on each which
$ U_{\Gamma_\omega} $ is equivalent to $ \pi_\lambda.$ Let
$ H_{\lambda,\omega}^j $ be the pull back of $ \tilde{H}_{\lambda,\omega}^j .$
Then we have

\begin{thm} Given a H-type nilmanifold $ \Gamma \backslash N $ we have
$$ L^2(\Gamma \backslash N ) = \sum_{\lambda,\omega} \sum_{j}
H_{\lambda,\omega}^j $$ where the sum is taken over all $ (\lambda, \omega) $
such that $ \pi_{\lambda,\omega} $ occurs in $ U_\Gamma.$
\end{thm}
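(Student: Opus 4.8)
The plan is to assemble the decomposition from the pieces already developed, using the reduction map $\alpha_\omega$ as the bridge between the H-type nilmanifold and the Heisenberg nilmanifolds attached to each direction $\omega \in S^{m-1}$. First I would invoke the general theory of nilmanifold representations (Moore \cite{Mo}) to write $L^2(\Gamma\backslash N)$ as a discrete orthogonal direct sum of $U_\Gamma$-invariant subspaces, on each of which $U_\Gamma$ is unitarily equivalent to a finite multiple of some $\pi_{\lambda,\omega}$; the set of $(\lambda,\omega)$ appearing is exactly the spectrum of $U_\Gamma$. This gives the outer sum $\sum_{\lambda,\omega}$. It then suffices to fix one such pair $(\lambda,\omega)$ and decompose the isotypic component $K(\lambda,\omega)$ further into copies of $\pi_\lambda$ after transporting it to $\Gamma_\omega\backslash\H^n$.

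Next I would carry out the transport. Since $\pi_{\lambda,\omega}(0,z)=e^{i\lambda z\cdot\omega}\mathrm{Id}$, every $f\in K(\lambda,\omega)$ is constant along the cosets of $A=\exp k(\omega)$ up to the central character, and in particular it is right $A$-invariant; hence $f$ descends to a function on $N/A$, which is isomorphic to $\H^n$ via $\alpha_\omega$ (Lemma 3.4). Using the fact, established in the previous subsection via Theorems 3.6 and 3.7 of \cite{BLU}, that $\Gamma_\omega=\alpha_\omega(\pi(\Gamma))$ is a uniform lattice in $\H^n$, I would identify $\tilde K(\lambda,\omega)$ (the image of $K(\lambda,\omega)$ under pull-up along $\alpha_\omega\circ\pi$, as defined in the statement) as a $U_{\Gamma_\omega}$-invariant closed subspace of $L^2(\Gamma_\omega\backslash\H^n)$. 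Now the Heisenberg theory of Section 2 applies verbatim: $\tilde K(\lambda,\omega)$ is contained in the $H_k$-type piece corresponding to the central parameter $\lambda$, and by Proposition 2.4 (and Stone--von Neumann) it splits into an orthogonal direct sum of the irreducible subspaces $\tilde H_{\lambda,\omega}^j$ on each of which $U_{\Gamma_\omega}\simeq\pi_\lambda$. Pulling these back via $\alpha_\omega$ gives the subspaces $H_{\lambda,\omega}^j\subset K(\lambda,\omega)$, and since pulling back is an isometric bijection intertwining $U_{\Gamma_\omega}$ with $U_\Gamma$, the decomposition $K(\lambda,\omega)=\sum_j H_{\lambda,\omega}^j$ is orthogonal with the right equivalence class on each summand. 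Summing over $(\lambda,\omega)$ in the spectrum yields the claimed formula.

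The main obstacle, and the one point requiring genuine care rather than bookkeeping, is the mutual orthogonality and exhaustiveness of the full family $\{H_{\lambda,\omega}^j\}$ across different $\omega$: a priori the isotypic components $K(\lambda,\omega)$ are mutually orthogonal (distinct irreducible representations appearing in a unitary representation have orthogonal isotypic subspaces), but one must check that the pull-back operation $f\mapsto f\circ\alpha_\omega\circ\pi$ is well defined on $K(\lambda,\omega)$, is a Hilbert-space isomorphism onto its image, and genuinely recovers everything — i.e.\ that no part of $K(\lambda,\omega)$ is lost under descent to $N/A$. This reduces to the right $A$-invariance observation above together with the uniform-lattice statement for $\Gamma_\omega$, both of which are in hand. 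I would also note that convergence of the double sum and the fact that the multiplicities are finite are immediate from Moore's theorem, so no separate argument is needed there. Since the proof is entirely a matter of combining Section 2 with the reduction lemmas and is structurally identical to the Heisenberg case, I would present it in a condensed form, referring to \cite{B,KTX2} for the details that are unchanged.
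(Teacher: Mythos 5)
Your proposal is correct and follows essentially the same route as the paper: Moore's theorem for the discrete isotypic decomposition, the central character argument to get right $A$-invariance and descend via $\alpha_\omega$ to $\Gamma_\omega\backslash\H^n$, and the Section 2 Heisenberg decomposition pulled back to give the $H_{\lambda,\omega}^j$. The paper presents exactly this as the discussion preceding the theorem; your version merely spells out the orthogonality and exhaustiveness points the paper leaves implicit.
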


Finally, by combining Theorem 2.9 and the above decomposition we can describe
the image of $ L^2(\Gamma \backslash N) $ under $ S_t^\Gamma.$

\begin{thm}
$$ S_t^\Gamma(L^2(\Gamma \backslash N)) = \sum_{\lambda,\omega} \sum_j
S_t^\Gamma(H_{\lambda,\omega}^j).$$
\end{thm}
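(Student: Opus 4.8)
The plan is to combine the two structural facts already established: the orthogonal decomposition of $L^2(\Gamma \backslash N)$ given in Theorem~2.11, and the description of the image of each Heisenberg piece under the heat kernel transform in Theorem~2.9. Since $S_t^\Gamma$ is obtained by restricting the heat kernel transform on $N$ to $\Gamma$-invariant functions, and since the heat semigroup $f \mapsto f*q_t$ is a bounded (in fact contractive, up to the obvious constant) linear map commuting with the right regular representation $U_\Gamma$, the transform $S_t^\Gamma$ carries each $U_\Gamma$-invariant subspace to a space of entire functions and respects orthogonal direct sums. Thus the first step is simply to observe
$$ S_t^\Gamma\left(L^2(\Gamma \backslash N)\right) = S_t^\Gamma\left(\sum_{\lambda,\omega}\sum_j H_{\lambda,\omega}^j\right) = \sum_{\lambda,\omega}\sum_j S_t^\Gamma(H_{\lambda,\omega}^j), $$
where the last equality requires justifying that $S_t^\Gamma$ is injective and that the images remain mutually orthogonal (or at least form a direct, topologically closed sum).

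For the second step I would pull each summand back to the Heisenberg setting via $\alpha_\omega$, exactly as in Theorem~2.11. Each $H_{\lambda,\omega}^j$ is, by construction, the pull-back of a subspace $\tilde H_{\lambda,\omega}^j$ of $L^2(\Gamma_\omega \backslash \H^n_\omega)$ on which $U_{\Gamma_\omega}$ acts as a multiple of $\pi_\lambda$. Using the intertwining identity $\pi_{\lambda,\omega}(f*q_t) = \pi_\lambda(f_\omega * k_t)$ together with $(q_t)_\omega = k_t$ — both recorded in Section~3.1 — the action of $S_t^\Gamma$ on $H_{\lambda,\omega}^j$ is, after the change of variables $\alpha_\omega$ and the modified Radon transform $R_\omega$, identified with the Heisenberg heat kernel transform $S_t^{\Gamma_\omega}$ acting on $\tilde H_{\lambda,\omega}^j$. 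At this point Theorem~2.9 applies verbatim: each such image is of the form $e^{t\lambda^2}V_{k,j}f * k_t$, i.e.\ a space ${\mathcal H}^t_{k,j}(\C^{2n},\Upsilon)$ twisted by the central character $e^{i\lambda\omega\cdot\zeta}$ and rescaled by the automorphism from the Remark following Theorem~2.8.

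The main obstacle is the convergence and closedness of the direct sum on the right-hand side, i.e.\ showing that $S_t^\Gamma$ of the Hilbert-space direct sum is again a genuine (orthogonal) Hilbert-space direct sum of the images, rather than merely an algebraic sum of dense subspaces. For this I would use the isometry property buried in Theorem~2.9 and Theorem~2.3: on each $H_k$ (hence on each $H_{k,j}$) the map $f \mapsto e^{t\lambda^2}V_{k,j}f * k_t$ is, up to the explicit constant $e^{2t\lambda(k)^2}$, a unitary onto the weighted Bergman space ${\mathcal H}^t_{k,j}$ with its norm $\|\cdot\|_{k,t}$; combined with the fact that distinct $(\lambda,\omega,j)$ correspond to inequivalent pieces (different central characters $e^{i\lambda\omega\cdot\zeta}$ separate different $(\lambda,\omega)$, and the extra quasi-periodicity conditions defining ${\mathcal H}^t_{k,j}$ separate different $j$), one gets that the images are mutually orthogonal in a natural weighted $L^2$ inner product on entire functions on $\C^{2n}\times\C^m$. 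The orthogonality across different $\omega$ is the delicate point, since the spaces $\H^n_\omega$ vary with $\omega$; here one invokes the decomposition already built into Theorem~2.10 and Theorem~2.11 and the Plancherel-type disintegration over $S^{m-1}$ implicit in the proof of Theorem~2.9. Granting these, the sum on the right is a closed orthogonal sum and the theorem follows. I would keep the write-up brief, citing \cite{KTX2} and Theorem~2.9 for the per-piece analysis and remarking that the only new ingredient is the bookkeeping of the $\omega$-parameter, which the reduction via $\alpha_\omega$ handles automatically.
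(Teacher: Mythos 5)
Your proposal is correct and follows essentially the same route as the paper: the paper offers no separate proof of this theorem, obtaining it directly by applying $S_t^\Gamma$ to the orthogonal decomposition of $L^2(\Gamma\backslash N)$ from the preceding theorem and then describing each summand $S_t^\Gamma(H_{\lambda,\omega}^j)$ by pulling back to $S_t^{\Gamma_\omega}(\tilde H_{\lambda,\omega}^j)$ via $\alpha_\omega$ and invoking Theorem 2.9. Your additional remarks on injectivity, closedness of the sum, and orthogonality of the images are careful elaborations of points the paper leaves implicit, not a different argument.
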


In order to describe $ S_t^\Gamma(H_{\lambda,\omega}^j) $ we need to describe
$ S_t^{\Gamma_\omega}(\tilde{H}_{\lambda,\omega}^j) $ which we have done
in Theorem 2.9 when $ \Gamma_\omega = \Gamma(\mathbf{l}).$ In general there
exists $ A \in \A_n $ and $ d > 0 $ such that
$ \Gamma_\omega = A(d. \Gamma(\mathbf{l})) $ and hence an explicit description
of $ S_t^{\Gamma_\omega}(\tilde{H}_{\lambda,\omega}^j) $ can, in principle, be
written down.

\begin{center}
{\bf Acknowledgments}

\end{center}
This work is supported  by J. C. Bose Fellowship from the Department of
Science and Technology (DST).

\end{document}